\theoremstyle{plain}
\numberwithin{equation}{section}
\newtheorem{theorem}{Theorem}[section]
\newtheorem{lemma}[theorem]{Lemma}
\newtheorem{definition-lemma}[theorem]{Definition-Lemma}
\newtheorem{proposition}[theorem]{Proposition}
\theoremstyle{definition}
\newtheorem{example}[theorem]{Example}
\newtheorem{remark}[theorem]{Remark}
\newenvironment{remarks}
{\vskip6pt \noindent {\bf Remarks:}} {\vskip6pt}
\newcommand{\id}         {{\mathrm {Id}}}
\newcommand{\Id}         {{\mathrm {Id}}}
\newcommand{\image}      {{\mathrm {im}}}
\newcommand{\Ker}        {{\mathrm {ker}}}
\newcommand{\pr}         {{\mathrm{pr}}}
\newcommand{\Diff}       {\mathrm{Diff}}
\newcommand{\Jac}        {\mathrm{Jac}}
\newcommand{\Cl}         {\mathrm{Cl}}
\newcommand{\SP} [1]     {{\left\langle {{#1}} \right\rangle}}
\newcommand{\frakg}     {\mathfrak{g}}
\newcommand{\gstar}     {\mathfrak{g}^*}
\newcommand{\T}         {\mathbb{T}}
\newcommand{\TN}        {\T N}
\newcommand{\TM}        {\T M}
\newcommand{\cC}       {\mathcal{C}}
\newcommand{\Cour}[1]      {[\![#1]\!]}
\newcommand{\Lie}        {\mathcal L}
\begin{document}
\title[]
{A brief introduction to Dirac manifolds}

\author[]{Henrique Bursztyn}

\address{Instituto de Matem\'atica Pura e Aplicada,
Estrada Dona Castorina 110, Rio de Janeiro, 22460-320, Brasil }
\email{henrique@impa.br}

\date{}

\maketitle

\begin{abstract}
These notes are based on a series of lectures given at the school on
\textit{Geometric and Topological Methods for Quantum Field Theory},
in Villa de Leyva, Colombia. We present a basic introduction to
Dirac manifolds, recalling the original context in which they were
defined, their main features, and briefly mentioning more recent
developments.
\end{abstract}

\tableofcontents

\section{Introduction}\label{sec:intro}
Phase spaces of classical mechanical systems are commonly modeled by
symplectic manifolds. It often happens that the dynamics governing
the system's evolution is constrained to particular submanifolds of
the phase space, e.g. level sets of conserved quantities (typically
associated with symmetries of the system, such as momentum maps), or
submanifolds resulting from constraints in the possible
configurations of the system, etc.
Any submanifold $C$ of a symplectic manifold $M$ inherits a
\textit{presymplectic} form (i.e., a closed 2-form, possibly
degenerate), given by the pullback of the ambient symplectic form to
$C$. It may be desirable to treat $C$ in its own right, which makes
presymplectic geometry the natural arena for the study of
constrained systems, see e.g. \cite{Got2,Gotay3}.

In many situations, however, phase spaces are modeled by more
general objects: Poisson manifolds (see e.g. \cite{MR}). A Poisson
structure on a manifold $M$ is a bivector field $\pi \in
\Gamma(\wedge^2 TM)$ such that the skew-symmetric bracket
$\{f,g\}:=\pi(df,dg)$ on $C^\infty(M)$ satisfies the Jacobi
identity. Just as for symplectic phase spaces, there are natural
examples of systems on Poisson phase spaces which are constrained to
submanifolds. The present notes address the following motivating
questions: what kind of geometric structure is inherited by a
submanifold $C$ of a Poisson manifold $M$? Can one ``pullback'' the
ambient Poisson structure on $M$ to $C$, similarly to what one does
when $M$ is symplectic? From another viewpoint, recall that $M$
carries a (possibly singular) symplectic foliation, which completely
characterizes its Poisson structure. Let us assume, for simplicity,
that the intersection of $C$ with each leaf $\mathcal{O}$ of $M$ is
a submanifold of $C$. Then $\mathcal{O}\cap C$ carries a
presymplectic form, given by the pullback of the symplectic form on
$\mathcal{O}$. So the Poisson structure on $M$ induces a
decomposition of $C$ into presymplectic leaves. Just as Poisson
structures define symplectic foliations, we can ask whether there is
a more general geometric object underlying foliations with
presymplectic leaves.

The questions posed in the previous paragraph naturally lead to
\textit{Dirac structures} \cite{courant,CouWe}, a notion that
encompasses presymplectic and Poisson structures. A key ingredient
in the definition of a Dirac structure on a manifold $M$ is the
so-called \textit{Courant bracket} \cite{courant} (see also
\cite{Dorf}), a bilinear operation on the space of sections of
$TM\oplus T^*M$ used to formulate a general integrability condition
unifying the requirements that a 2-form is closed and that a
bivector field is Poisson. These notes present the basics of Dirac
structures, including their main geometric features and key
examples. Most of the material presented here goes back to Courant's
original paper \cite{courant}, perhaps the only exception being the
discussion about morphisms in the category of Dirac manifolds in
Sec.~\ref{sec:morphisms}.

Despite its original motivation in constrained
mechanics\footnote{Dirac structures are named after Dirac's work on
the theory of constraints in classical mechanics (see e.g.
\cite{Dir,Sni}), which included a classification of constraint
surfaces (first class, second class...), the celebrated Dirac
bracket formula, as well as applications to quantization and field
theory.}, recent developments in the theory of Dirac structures are
related to a broad range of topics in mathematics and mathematical
physics. Due to space and time limitations, this paper is
\textit{not} intended as a comprehensive survey of this fast growing
subject (which justifies the omission of many worthy contributions
from the bibliography). A (biased) selection of recent aspects of
Dirac structures is briefly sketched at the end of the paper.

The paper is structured as follows. In Section \ref{sec:poisspre},
we recall the main geometric properties of presymplectic and Poisson
manifolds. Section~\ref{sec:dirac} presents the definition of Dirac
structures and their first examples. The main properties of Dirac
structures are presented in Section~\ref{sec:properties}.
Section~\ref{sec:morphisms} discusses morphisms between Dirac
manifolds. Section~\ref{sec:back} explains how Dirac structures are
inherited by submanifolds of Poisson manifolds.
Section~\ref{sec:survey} briefly mentions some more recent
developments and applications of Dirac structures.

\noindent {\bf Acknowledgments:} I would like to thank the
organizers and participants of the school in Villa de Leyva,
particularly Alexander Cardona for his invitation and encouragement
to have these lecture notes written up. Versions of these lectures
were delivered at Porto (\emph{Oporto Meeting on Geometry, Topology
and Physics, 2009}) and Canary Islands (\emph{Young Researchers
Workshop on Geometry, Mechanics and Control, 2010}) and helped me to
shape up the notes (which I hope to expand in the future); I thank
the organizers of these meetings as well. I am also indebted to P.
Balseiro, A. Cardona, L. Garcia-Naranjo, M. Jotz, C. Ortiz and M.
Zambon for their comments on this manuscript.

\subsection{Notation, conventions, terminology}\label{subsec:facts}
All manifolds, maps, vector bundles, etc. are smooth, i.e., in the
$C^\infty$ category. Given a smooth map $\varphi: M\to N$ and a
vector bundle $A\to N$, we denote the pullback of $A$ to $M$ by
$\varphi^* A \to M$.

For a vector bundle $E\to M$,  a \emph{distribution} $D$ in $E$
assigns to each $x\in M$ a vector subspace $D_x\subseteq E_x$. If
the dimension of $D_x$, called the \emph{rank} of $D$ at $x$, is
independent of $x$, we call the distribution \emph{regular}. A
distribution $D$ in $E$ is \emph{smooth} if for any $x\in M$ and
$v_0 \in D_x$, there is a smooth local section $v$ of $E$ (defined
on a neighborhood of $x$) such that $v(y)\in D_y$ and $v(x)=v_0$. A
distribution that is smooth and regular is a subbundle. The rank of
a smooth distribution is a lower semi-continuous function on $M$.
For a vector bundle map $\Phi: E\to A$ covering the identity, the
image $\Phi(E)$ is a smooth distribution of $A$; the kernel
$\ker(\Phi)$ is a distribution of $E$ whose rank is an upper
semi-continuous function, so it is smooth if and only if it has
locally constant rank. A smooth distribution $D$ in $TM$ is
\emph{integrable} if any $x\in M$ is contained in an
\textit{integral submanifold}, i.e., a connected immersed
submanifold $\mathcal{O}$ so that $D|_{\mathcal{O}}=T\mathcal{O}$.
An integrable distribution defines a decomposition of $M$ into
\textit{leaves} (which are the maximal integral submanifolds); we
generally refer to this decomposition of $M$ as a \textit{singular
foliation}, or simply a \textit{foliation}; see e.g.
\cite[Sec.~1.5]{DZ} for details. When $D$ is smooth and has constant
rank, the classical Frobenius theorem asserts that $D$ is integrable
if and only if it is involutive. We refer to the resulting foliation
in this case as \textit{regular}.

Throughout the paper, the Einstein summation convention is
consistently used.

\section{Presymplectic and Poisson structures}\label{sec:poisspre}

A symplectic structure on a manifold can be defined in two
equivalent ways: either by a nondegenerate closed 2-form or by a
nondegenerate Poisson bivector field. If one drops the nondegeneracy
assumption, the first viewpoint leads to the notion of a
\textit{presymplectic} structure, while the second leads to
\textit{Poisson} structures. These two types of ``degenerate''
symplectic structures  have distinct features that will be recalled
in this section.

\subsection{Two viewpoints to symplectic geometry}
Let $M$ be a smooth manifold. A 2-form $\omega \in \Omega^2(M)$ is
called \textit{symplectic} if it is nondegenerate and $d\omega=0$.
The nondegeneracy assumption means that the bundle map
\begin{equation}\label{eq:omegab}
\omega^\sharp: TM \to T^*M,\;\; X \mapsto i_X\omega,
\end{equation}
is an isomorphism; in local coordinates, writing $\omega =
\frac{1}{2}\omega_{ij} dx^i\wedge dx^j$, this amounts to the
pointwise invertibility of the matrix $(\omega_{ij})$. The pair
$(M,\omega)$, where $\omega$ is a symplectic 2-form, is called a
\textit{symplectic manifold}.

The basic ingredients of the hamiltonian formalism on a symplectic
manifold $(M,\omega)$ are as follows. For any function $f\in
C^\infty(M)$, there is an associated \textit{hamiltonian vector
field} $X_f \in \mathcal{X}(M)$, uniquely defined by the condition
\begin{equation}\label{eq:hamvf}
i_{X_f}\omega = df.
\end{equation}
In other words, $X_f = (\omega^\sharp)^{-1}(df)$. There is an
induced bilinear operation
$$
\{\cdot,\cdot\}: C^\infty(M)\times C^\infty(M)\to C^\infty(M),
$$
known as the \textit{Poisson bracket}, that measures the rate of
change of a function $g$ along the hamiltonian flow of a function
$f$,
\begin{equation}\label{eq:pbrk}
\{f,g\} := \omega(X_g,X_f)=\Lie_{X_f}g.
\end{equation}
The Poisson bracket is skew-symmetric, and one verifies from its
definition that
\begin{equation}\label{eq:ident}
d\omega(X_f,X_g,X_h) = \{f,\{g,h\}\} + \{h,\{f,g\}\} +
\{g,\{h,f\}\};
\end{equation}
it follows that the Poisson bracket satisfies the Jacobi identity,
since $\omega$ is closed. The pair $(C^\infty(M),\{\cdot,\cdot\})$
is a \textit{Poisson algebra}, i.e., $\{\cdot,\cdot\}$ is a Lie
bracket on $C^\infty(M)$ that is compatible with the associative
commutative product on $C^\infty(M)$ via the Leibniz rule:
$$
\{f, gh\} = \{f,g\} h + \{f,h\} g.
$$

It results from the Leibniz rule that the Poisson bracket is defined
by a bivector field $\pi \in \Gamma(\wedge^2TM)$, uniquely
determined by
\begin{equation}\label{eq:pi}
\pi(df,dg)=\{f,g\} = \omega(X_g,X_f);
\end{equation}
we write it locally as
\begin{equation}\label{eq:localp}
\pi = \frac{1}{2}\pi^{ij}\frac{\partial}{\partial{x^i}}\wedge
\frac{\partial}{\partial{x^j}}.
\end{equation}

The bivector field $\pi$ defines a bundle map
\begin{equation}\label{eq:pisharp}
\pi^\sharp: T^*M \to TM, \;\; \alpha\mapsto i_\alpha \pi,
\end{equation}
in such a way that $X_f = \pi^\sharp(df)$. Since $df =
\omega^\sharp(X_f) = \omega^\sharp(\pi^\sharp(df))$, we see that
$\omega$ and $\pi$ are related by
\begin{equation}\label{eq:equiv}
\omega^\sharp = (\pi^\sharp)^{-1}\;\;\;\mbox{ and }\;\;
(\omega_{ij}) = (\pi^{ij})^{-1}.
\end{equation}

The whole discussion so far can be turned around, in that one can
take the bivector field $\pi\in \Gamma(\wedge^2 TM)$, rather than
the 2-form $\omega$,  as the starting point to define a symplectic
structure. Given a bivector field $\pi \in \Gamma(\wedge^2TM)$, we
call it \textit{nondegenerate} if the bundle map \eqref{eq:pisharp}
is an isomorphism or, equivalently, if the local matrices
$(\pi^{ij})$ in \eqref{eq:localp} are invertible at each point. We
say that $\pi$ is \textit{Poisson} if the skew-symmetric bilinear
bracket $\{f,g\} = \pi(df,dg)$, $f,g \in C^\infty(M)$, satisfies the
Jacobi identity:
\begin{equation}\label{eq:jacobi}
\Jac_\pi(f,g,h):= \{f,\{g,h\}\} + \{h,\{f,g\}\} + \{g,\{h,f\}\} = 0,
\end{equation}
for all $f, g, h \in C^\infty(M)$.

The relation
$$
\pi(df,dg)=\omega(X_g,X_f)
$$
establishes a 1-1 correspondence between nondegenerate bivector
fields and nondegenerate 2-forms on $M$, in such a way that the
bivector field is Poisson if and only if the corresponding 2-form is
closed (see \eqref{eq:ident}). So a symplectic manifold can be
equivalently defined as a manifold $M$ equipped with a nondegenerate
bivector field $\pi$ that is Poisson.

The two alternative viewpoints to symplectic structures are
summarized in the following table:

\medskip

\begin{center}
\begin{tabular}{ |c | c| }
\hline
 nondegenerate $\pi \in \Gamma(\wedge^2 TM)$ & nondegenerate $\omega\in \Omega^2(M)$  \\ 
 $\Jac_\pi = 0$ & $d\omega = 0$   \\ 
 $X_f = \pi^\sharp(df)$ & $ i_{X_f}\omega = df$  \\ 
$\{f,g\} = \pi(df,dg)$ & $\{f,g\} = \omega(X_g,X_f)$\\ \hline
\end{tabular}
\end{center}

\medskip

Although the viewpoints are interchangeable, one may turn out to be
more convenient than the other in specific situations, as
illustrated next.

\subsection{Going degenerate}

There are natural geometrical constructions in symplectic geometry
that may spoil the nondegeneracy condition of the symplectic
structure, and hence take us out of the symplectic world. We mention
two examples.

Consider the problem of passing from a symplectic manifold $M$ to a
submanifold $\iota: C\hookrightarrow M$. To describe the geometry
that $C$ inherits from $M$, it is more natural to represent the
symplectic structure on $M$ by a 2-form $\omega$, which can then be
pulled back to $C$. The resulting 2-form $\iota^*\omega$ on $C$ is
always closed, but generally fails to be nondegenerate.

As a second example, suppose that a Lie group $G$ acts on a
symplectic manifold $M$ by symmetries, i.e., preserving the
symplectic structure, and consider the geometry inherited by the
quotient $M/G$ (we assume, for simplicity, that the action is free
and proper, so the orbit space $M/G$ is a smooth manifold). In this
case, it is more convenient to think of the symplectic structure on
$M$ as a Poisson bivector field $\pi$, which can then be projected,
or pushed forward, to $M/G$ since $\pi$ is assumed to be
$G$-invariant. The resulting bivector field on $M/G$ always
satisfies \eqref{eq:jacobi}, but generally fails to be
nondegenerate.

These two situations illustrate why one may be led to generalize the
notion of a symplectic structure by dropping the nondegeneracy
condition, and how there are two natural ways to do it. Each way
leads to a different kind of geometry: A manifold equipped with a
closed 2-form, possibly degenerate, is referred to as
\textit{presymplectic}, while a \textit{Poisson manifold} is a
manifold equipped with a Poisson bivector field, not necessarily
nondegenerate. The main features of presymplectic and Poisson
manifolds are summarized below.

\smallskip

\subsubsection{Presymplectic manifolds}\label{subsub:pre}\

On a presymplectic manifold $(M,\omega)$, there is a natural
\textit{null distribution} $K\subseteq TM$, defined at each point
$x\in M$ by the kernel of $\omega$:
$$
K_x := \Ker(\omega)_x = \{X\in T_xM\;|\; \omega(X,Y)=0 \; \forall\;
Y\in T_xM\}.
$$
This distribution is not necessarily regular or smooth. In fact, $K$
is a smooth distribution if and only if it has locally constant rank
(see Section~\ref{subsec:facts}). For $X,Y \in \Gamma(K)$, note that
$$
i_{[X,Y]}\omega = \Lie_X i_Y \omega - i_Y\Lie_X\omega = \Lie_X i_Y
\omega - i_Y(i_Xd + di_X)\omega =0;
$$
it follows that, when $K$ is regular, it is integrable by Frobenius'
theorem. We refer to the resulting regular foliation tangent to $K$
as the \textit{null foliation} of $M$.

One may still define hamiltonian vector fields on $(M,\omega)$ via
\eqref{eq:hamvf}, but, without the nondegeneracy assumption on
$\omega$, there might be functions admitting no hamiltonian vector
fields (e.g. if $df$ lies outside the image of \eqref{eq:omegab} at
some point). We say that a function $f\in C^\infty(M)$ is
\textit{admissible} if there exists a vector field $X_f$ such that
\eqref{eq:hamvf} holds. In this case,  $X_f$ is generally not
uniquely defined, as we may change it by the addition of any vector
field tangent to $K$. Still, the Poisson bracket formula
\begin{equation}\label{eq:pbrk1}
\{f,g\} = \Lie_{X_f} g
\end{equation}
is well defined (i.e., independent of the choice of $X_f$) when $f$
and $g$ are admissible. Hence the space of admissible functions,
denoted by
$$
C^\infty_{adm}(M) \subseteq C^\infty(M),
$$
is a Poisson algebra.

When $K$ is regular, a function is admissible if and only if
$df(K)=0$, i.e., $f$ is constant along the leaves of the null
foliation; in particular, depending on how complicated this
foliation is, there may be very few admissible functions (e.g., if
there is a dense leaf, only the constant functions are admissible).
When $K$ is regular and the associated null foliation is simple,
i.e., the leaf space $M/K$ is smooth and the quotient map $q: M\to
M/K$ is a submersion, then $M/K$ inherits a symplectic form
$\omega_{red}$, uniquely characterized by the property that
$q^*\omega_{red}=\omega$; in this case, the Poisson algebra of
admissible functions on $M$ is naturally identified with the Poisson
algebra of the symplectic manifold $(M/K, \omega_{red})$ via
$$
q^*: C^\infty(M/K)\stackrel{\sim}{\to} C^\infty_{adm}(M)
$$
(see e.g. \cite[Sec.~6.1]{OR} and references therein).


\subsubsection{Poisson manifolds}\label{subsub:poiss}\

If $(M,\pi)$ is a Poisson manifold, then any function $f\in
C^\infty(M)$ defines a (unique) hamiltonian vector field $X_f
=\pi^\sharp(df)$, and the whole algebra of smooth functions
$C^\infty(M)$ is a Poisson algebra with bracket
$\{f,g\}=\pi(df,dg)$.

The image of the bundle map $\pi^\sharp$ in \eqref{eq:pisharp}
defines a distribution on $M$,
\begin{equation}\label{eq:R}
R :=\pi^\sharp(T^*M)\subseteq TM,
\end{equation}
not necessarily regular, but always smooth and integrable. (The
integrability of the distribution $R$ may be seen as a consequence
of Weinstein's splitting theorem \cite{We83}.) So it determines a
singular foliation of $M$, in such a way that two points in $M$ lie
in the same leaf if and only if one is accessible from the other
through a composition of local hamiltonian flows. One may verify
that the bivector field $\pi$ is ``tangent to the leaves'', in the
sense that, if $f \in C^\infty(M)$ satisfies $\iota^*f \equiv 0$ for
a leaf $\iota: \mathcal{O}\hookrightarrow M$, then $X_f \circ \iota
\equiv 0$. So there is an induced Poisson bracket
$\{\cdot,\cdot\}_{\mathcal{O}}$ on $\mathcal{O}$ determined by
$$
\{f\circ \iota, g\circ \iota  \}_{\mathcal{O}} := \{f,g\}\circ
\iota,\;\;\; f,g \in C^\infty(M),
$$
which is nondegenerate; in particular, each leaf carries a
symplectic form, and one refers to this foliation as the
\textit{symplectic foliation} of $\pi$. The symplectic foliation of
a Poisson manifold uniquely characterizes the Poisson structure. For
more details and examples, see e.g. \cite{CW,DZ,MR}.

\begin{remark}\label{rem:liealg}
The integrability of the distribution \eqref{eq:R} may be also seen
as resulting from the existence of a Lie algebroid structure on
$T^*M$, with anchor $\pi^\sharp:T^*M\to TM$ and Lie bracket on
$\Gamma(T^*M)=\Omega^1(M)$ uniquely characterized by
$$
[df,dg]=d\{f,g\},
$$
see e.g. \cite{CW,CDW}; we will return to Lie algebroids in
Section~\ref{sec:properties}.
\end{remark}

\section{Dirac structures}\label{sec:dirac}

Dirac structures were introduced in \cite{courant,CouWe} as a way to
treat both types of ``degenerate'' symplectic structures, namely
 presymplectic and Poisson, in a unified manner. This common framework relies
on viewing presymplectic and Poisson structures as subbundles of
$$
\TM:= TM\oplus T^*M,
$$
defined by the graphs of the bundle maps
\eqref{eq:omegab} and \eqref{eq:pisharp}. The precise definition of
a Dirac structure resorts to additional geometrical structures
canonically present on $\TM$, as we now recall.

Let us consider $\mathbb{T}M$ equipped with the natural projections
$$\pr_{T}: \mathbb{T}M\to TM,\;\mbox{ and }\;
\pr_{T^*}:\mathbb{T}M\to T^*M,
$$
as well as two extra structures:  the nondegenerate, symmetric
fibrewise bilinear form $\SP{\cdot,\cdot}$ on $\TM$, given at each
$x\in M$ by
\begin{equation}\label{eq:pair}
\SP{(X,\alpha),(Y,\beta)}=\beta(X)+ \alpha(Y),\;\; X,Y \in T_xM,\;
\alpha,\beta \in T_x^*M,
\end{equation}
and the \textit{Courant bracket}
$\Cour{\cdot,\cdot}:\Gamma(\mathbb{T}M)\times \Gamma(\mathbb{T}M)
\to \Gamma(\mathbb{T}M)$,
\begin{equation}\label{eq:cour}
 \Cour{(X,\alpha),(Y,\beta)}=([X,Y],\Lie_X\beta - \Lie_Y\alpha
+\frac{1}{2}d(\alpha(Y)-\beta(X))).
\end{equation}

A \textit{Dirac structure} on $M$ is a vector subbundle $L\subset
\mathbb{T}M$ satisfying:
\begin{itemize}
\item[(i)] $L=L^\perp$, where the orthogonal is with respect to $\SP{\cdot,\cdot}$,
\item[(ii)] $\Cour{\Gamma(L),\Gamma(L)}\subseteq \Gamma(L)$, i.e., $L$ is involutive with respect
to $\Cour{\cdot,\cdot}$.
\end{itemize}

\begin{remarks}
\begin{itemize}
\item Since the pairing $\SP{\cdot,\cdot}$ has split signature,
condition (i) is equivalent to $\SP{\cdot,\cdot}|_L = 0$ and
$\mathrm{rank}(L)=\mathrm{dim}(M)$.

\item The Courant bracket satisfies
\begin{equation}\label{eq:jacobifails}
\Cour{\Cour{a_1,a_2}, a_3} + c.p. =
\frac{1}{3}d(\SP{\Cour{a_1,a_2},a_3} + c.p.),
\end{equation}
for $a_1, a_2, a_3 \in \Gamma(\mathbb{T}M)$, where $c.p.$ stands for
``cyclic permutations''. So it fails to satisfy the Jacobi identity,
and it is not a Lie bracket\footnote{The properties of the Courant
bracket are axiomatized in the notion of a \textit{Courant
algebroid} \cite{LWX}, see Section~\ref{sec:survey}.}.

\item One may alternatively use, instead of \eqref{eq:cour}, the non-skew-symmetric bracket
$$
((X,\alpha),(Y,\beta))\mapsto ([X,Y],\Lie_X\beta - i_Yd\alpha)
$$
for condition (ii); \eqref{eq:cour} is the skew-symmetrization of
this bracket, and a simple computation shows that both brackets
agree on sections of subbundles satisfying (i).
\end{itemize}
\end{remarks}

A subbundle $L\subset \TM$ satisfying (i) is sometimes referred to
as a \textit{lagrangian subbundle} of $\TM$ (in analogy with the
terminology in symplectic geometry), or as an \textit{almost Dirac
structure} on $M$. Condition (ii) is referred to as the
\textit{integrability condition}; by (i), condition (ii) can be
equivalently written as
\begin{equation}\label{eq:3tensor}
\SP{\Cour{a_1,a_2},a_3} \equiv 0,\;\;\; \forall \, a_1, a_2, a_3 \in
\Gamma(L).
\end{equation}
For any lagrangian subbundle $L\subset \TM$, the expression
\begin{equation}\label{eq:couranttensor}
\Upsilon_L(a_1,a_2,a_3):= \SP{\Cour{a_1,a_2},a_3},
\end{equation}
for $a_1, a_2, a_3 \in \Gamma(L)$, defines an element $\Upsilon_L\in
\Gamma(\wedge^3L^*)$ that we call the \textit{Courant tensor} of
$L$. Hence, for a lagrangian subbundle $L$, the integrability
condition (ii) is equivalent to the vanishing condition $\Upsilon_L
\equiv 0$.

\begin{example}
Any bivector field $\pi \in \Gamma(\wedge^2TM)$ defines a lagrangian
subbundle of $\mathbb{T}M$ given by the graph of \eqref{eq:pisharp},
\begin{equation}\label{eq:Lpi}
L_\pi =\{(\pi^\sharp(\alpha),\alpha)\,|\, \alpha\in T^*M\}.
\end{equation}
One may verify that, for $a_i = (\pi^\sharp(df_i),df_i)$, $i=1, 2,
3$,
$$
\Upsilon_{L_\pi}(a_1,a_2,a_3)=\SP{\Cour{a_1,a_2},a_3} =
\{f_1,\{f_2,f_3\}\} + c.p.
$$
So \eqref{eq:3tensor} holds, i.e., \eqref{eq:Lpi} is a Dirac
structure, if and only if $\pi$ is a Poisson bivector field. In
fact, Poisson structures can be identified with Dirac structures
$L\subset \mathbb{T}M$ with the additional property that
\begin{equation}\label{eq:transv1}
L\cap TM = \{0\}.
\end{equation}

Similarly, any 2-form $\omega\in \Omega^2(M)$ defines the lagrangian
subbundle
\begin{equation}\label{eq:Lomega}
L_{\omega}= \mathrm{graph}(\omega^\sharp) =
\{(X,\omega^\sharp(X))\;|\; X\in TM\} \subset \mathbb{T}M.
\end{equation}
In this case, for $a_i=(X_i,\omega^\sharp(X_i))$, $i=1,2,3$,
$$
\Upsilon_{L_\omega}(a_1,a_2,a_3) = \SP{\Cour{a_1,a_2},a_3} =
d\omega(X_1,X_2,X_3).
$$
So \eqref{eq:Lomega} is a Dirac structure if and only if $\omega$ is
presymplectic, and presymplectic structures are identified with
Dirac structures $L$ satisfying
\begin{equation}\label{eq:transv2}
L\cap T^*M = \{0\}.
\end{equation}
\end{example}

\begin{example}\label{ex:foliation}
Let $F\subseteq TM$ be a regular distribution, i.e., a vector
subbundle, and let $L= F\oplus F^\circ$, where $F^\circ\subseteq
T^*M$ is the annihilator of $F$. Clearly $L=L^\perp$, and one may
check that $L$ satisfies the integrability condition (ii) if and
only if $F$ is involutive, i.e., it is tangent to the leaves of a
regular foliation (by Frobenius' theorem). So one may view regular
foliations as particular cases of Dirac structures.
\end{example}

\begin{remark}\label{rem:complex}
The notion of Dirac structure may be naturally extended to the
complexification $\mathbb{T}M\otimes \mathbb{C}$. A natural example
arises as follows: consider an almost complex structure $J$ on $M$,
i.e., an endomorphism $J:TM\to TM$ satisfying $J^2=-1$, and let
$T_{1,0} \subset TM\otimes \mathbb{C}$ be its $+i$-eigenbundle.
Following the previous example, $L= T_{1,0}\oplus (T_{1,0})^\circ =
T_{1,0}\oplus T^{0,1}$ is a (complex) Dirac structure provided
$T_{1,0}$ is involutive, i.e., $J$ is integrable  as a complex
structure. An interesting class of complex Dirac structures that
includes this example is studied in \cite{Gua} (see
Section~\ref{sec:survey}).
\end{remark}

\begin{example}
Let $P$ be a manifold, and let $\omega_t$, $t\in \mathbb{R}$, be a
smooth family of closed 2-forms on $P$. This family determines a
Dirac structure $L$ on $M=P\times \mathbb{R}$ by
$$
L_{(x,t)}=\{((X,0),(i_X\omega_t,\gamma))\,|\, X\in T_xP,\, \gamma\in
\mathbb{R}\}\subset (T_xP\times \mathbb{R}) \oplus (T^*_xP \times
\mathbb{R}),
$$
where $(x,t)\in P\times \mathbb{R}$.
\end{example}

\begin{example}
Consider $M= \mathbb{R}^3$, with coordinates $(x,y,z)$, and let
$$
L = \mathrm{span}\SP{\left(\frac{\partial}{\partial y}, z dx\right),
\left(\frac{\partial}{\partial x}, -z dy\right), \left(0,dz\right
)}.
$$
For $z\neq 0$, condition \eqref{eq:transv1} is satisfied, and we see
that we can write $L$ as the graph of the Poisson structure $\pi =
\frac{1}{z}\frac{\partial}{\partial x}\wedge
\frac{\partial}{\partial y}$, with brackets
$$
\{x,y\}=\frac{1}{z},\;\; \{x,z\}=0,\;\; \{y,z\}=0.
$$
Note that $L$ is a {\em smooth Dirac structure}, despite the fact
that this Poisson structure is singular at $z=0$; the singularity
just reflects the fact that $L$ ceases to be the graph of a bivector
field (i.e., to satisfy \eqref{eq:transv1}) when $z=0$.
\end{example}

\begin{example}\label{ex:gauge}
Other examples of Dirac structure arise from the observation
\cite{SW} that the abelian group of closed 2-forms
$\Omega^2_{cl}(M)$ acts on $\mathbb{T}M$ preserving both
$\SP{\cdot,\cdot}$ and $\Cour{\cdot,\cdot}$ (more generally, the
group of vector-bundle automorphisms of $\TM$ preserving
$\SP{\cdot,\cdot}$ and $\Cour{\cdot,\cdot}$ is $\Diff(M)\ltimes
\Omega^2_{cl}(M)$, see e.g. \cite[Sec.~3]{Gua}); the action is given
by
$$
(X,\alpha) \stackrel{\tau_B}{\mapsto} (X,\alpha+i_XB),\;\;\; \mbox{
for } B\in \Omega^2_{cl}(M).
$$
As a result, this operation, called {\em gauge transformation} in
\cite{SW}, sends Dirac structures to Dirac structures. So one may
construct new Dirac structures from old with the aid of closed
2-forms. For example, given an involutive distribution $F\subseteq
TM$, then the associated Dirac structure of
Example~\ref{ex:foliation} may be modified by $B\in
\Omega^2_{cl}(M)$ to yield a new Dirac structure
$$
\{(X,\alpha)\,|\, X\in F,\; (\alpha -i_XB)|_F=0\}.
$$
\end{example}

More examples of Dirac structures coming from ``constraints'' in
Poisson manifolds will be discussed in Section~\ref{sec:back}.

\section{Properties of Dirac structures}
\label{sec:properties}

We now describe the main geometric features of Dirac structures,
generalizing those of presymplectic and Poisson manifolds recalled
in Section~\ref{sec:poisspre}. Specifically, we will see how the
null distribution and the Poisson algebra of admissible functions of
presymplectic manifolds, and the symplectic foliation of Poisson
manifolds, generalize to Dirac manifolds.

\subsection{Lie algebroid}

A \textit{Lie algebroid} is a vector bundle $A\to M$ equipped with a
Lie bracket $[\cdot,\cdot]$ on $\Gamma(A)$ and a bundle map (called
the \textit{anchor}) $\rho:A \to TM$ so that
$$
[u,f v]=(\Lie_{\rho(u)}f) v + f [u,v],
$$
for $u, v \in \Gamma(A)$ and $f\in C^\infty(M)$. A key property of a
Lie algebroid $A$ is that $\rho(A)\subseteq TM$ is an integrable
smooth distribution (possibly of non-constant rank); one refers to
the leaves of the associated foliation as \textit{leaves} or
\textit{orbits} of the Lie algebroid. More details on Lie algebroids
can be found e.g. in \cite{CW,DZ}.

For a Dirac structure $L$ on $M$, the vector bundle $L\to M$
inherits a Lie algebroid structure, with bracket on $\Gamma(L)$
given by the restriction of $\Cour{\cdot,\cdot}$  and anchor given
by restriction of $\pr_T$ to $L$:
\begin{equation}\label{eq:LAstr}
[\cdot,\cdot]_L := \Cour{\cdot,\cdot}|_{\Gamma(L)},\;\;\; \rho_L :=
\pr_T|_L: L\to TM.
\end{equation}
Note that the Jacobi identity of $[\cdot,\cdot]_L$ follows from
\eqref{eq:jacobifails} and \eqref{eq:3tensor}.

On a Poisson manifold $(M,\pi)$, viewed as a Dirac structure with
$L$ given by \eqref{eq:Lpi}, we have a vector-bundle isomorphism
$\pr_{T^*}|_L: L\to T^*M$. The Lie algebroid structure on $L$ can be
carried over to $T^*M$, and it coincides with the one mentioned in
Remark~\ref{rem:liealg}.

For a presymplectic manifold $(M,\omega)$, the anchor $\rho_L: L\to
TM$ is an isomorphism between the Lie algebroid structure of $L$ and
the canonical one on $TM$ (defined by the usual Lie bracket of
vector fields and anchor given by the identity).

\subsection{Presymplectic leaves and null distribution}
Since \eqref{eq:LAstr} endows any Dirac structure $L$ on $M$ with a
Lie algebroid structure, the distribution
\begin{equation}\label{eq:distrib}
R = \pr_T(L)\subseteq TM
\end{equation}
is integrable and defines a (singular) foliation on $M$. Each leaf
$\mathcal{O}$ of this foliation naturally inherits a closed 2-form
$\Omega_L \in \Omega^2(\mathcal{O})$, defined at each $x\in
\mathcal{O}$ by
$$
\Omega_L(X,Y)=\alpha(Y),  \;\; \mbox{ where } \; X,Y \in
T_x\mathcal{O}=R|_x \;\mbox{ and }\; (X,\alpha) \in L|_x.
$$
The fact that the formula for $\Omega_L$ does not depend on the
choice of $\alpha$ results from the observation that
$$
R^\circ = L\cap T^*M,
$$
while the smoothness of $\Omega_L$ follows from the existence of
smooth splittings for the surjective bundle map $\pr_T|_L:
L|_{\mathcal{O}}\to T\mathcal{O}$; the condition $d\Omega_L=0$
follows from the integrability condition of $L$. One refers to the
foliation defined by \eqref{eq:distrib} as the \textit{presymplectic
foliation} of $L$. In the case of a Poisson manifold, this is just
its symplectic foliation, while for a presymplectic manifold the
leaves are its connected components.

The distribution
\begin{equation}\label{eq:null}
K := L\cap TM \subseteq TM
\end{equation}
agrees, at each point, with the kernel of the leafwise 2-form
$\Omega_L$, so we refer to it as the \textit{null distribution} (or
\textit{kernel}) of the Dirac structure; as we saw for presymplectic
manifolds, this distribution is not smooth unless it is regular, in
which case it is automatically integrable, giving rise to a regular
foliation - the \textit{null foliation}. Note that the null
distribution is zero at all points if and only if $L$ is defined by
a Poisson structure (c.f. \eqref{eq:transv1}).

In analogy to what happens for Poisson manifolds, a Dirac structure
$L$ is completely determined by its presymplectic foliation; indeed,
at each point,
$$
L=\{(X,\alpha)\,|\, X\in \pr_T(L),\; \alpha|_{\pr_T(L)} =
i_X\Omega_L\}.
$$

The reader should have no problem to identify the distributions $K$,
$R$, and the presymplectic foliation in each example of
Section~\ref{sec:dirac}.

In the spirit of Section~\ref{sec:poisspre} we have, schematically,
the following equivalences:
\begin{eqnarray*}
\mbox{Nondegenerate Poisson structure }  & \rightleftharpoons&
\mbox{ Symplectic structure}\\
\mbox{Poisson structure} &  \rightleftharpoons & \mbox{ Symplectic foliation}\\
\mbox{Dirac structure} &  \rightleftharpoons& \mbox{ Presymplectic
foliation}
\end{eqnarray*}

\subsection{Hamiltonian vector fields and Poisson algebra}

The notion of hamiltonian vector field also extends to Dirac
structures. Let $L$ be a Dirac structure on $M$. Following the
discussion for presymplectic manifolds in Section~\ref{subsub:pre},
a function $f\in C^\infty(M)$ is called \textit{admissible} if there
is a vector field $X$ such that
$$
(X,df) \in L,
$$
in which case $X$ is called \textit{hamiltonian} relative to $f$.
Just as for presymplectic manifolds, $X$ is not uniquely determined,
as it may be modified by the addition of any vector field tangent to
the null distribution $K=L\cap TM$. The space of admissible
functions, denoted by $C^\infty_{adm}(M)$, is always a Poisson
algebra, with Poisson bracket defined as in \eqref{eq:pbrk1}.

When $K$ is regular, a function $f$ is admissible if and only if its
differential annihilates $K$, i.e., if $f$ is constant along the
leaves of the null foliation. In particular, if the null foliation
is simple, its leaf space acquires a Poisson structure (generalizing
the symplectic structure $\omega_{red}$ of Section~\ref{subsub:pre})
through the identification of its functions with admissible
functions on $M$.

In conclusion, Dirac structures naturally mix presymplectic and
Poisson features (controlled by the distributions $R$ and $K$):
presymplectic structures arise on their leaves, while Poisson
structures arise on their algebras of admissible functions.

\section{Morphisms of Dirac manifolds} \label{sec:morphisms}

As we saw in Section \ref{sec:poisspre}, symplectic structures are
equivalently described in terms of (nondegenerate) 2-forms or
bivector fields. As much as the two approaches are equivalent, they
naturally lead to different notions of morphism, reflecting the fact
that covariant and contravariant tensors have different functorial
properties with respect to maps.

Let $M_i$ be equipped with a symplectic form $\omega_i$, and
corresponding Poisson bivector field $\pi_i$, $i=1,2$. There are two
natural ways in which a map $\varphi: M_1\to M_2$ can preserve
symplectic structures: either by preserving symplectic 2-forms,
\begin{equation}\label{eq:sym1}
\varphi^*\omega_2 = \omega_1
\end{equation}
or by preserving Poisson bivector fields,
\begin{equation}\label{eq:sym2}
\varphi_*\pi_1 = \pi_2;
\end{equation}
by \eqref{eq:sym2} we mean that $\pi_1$ and $\pi_2$ are
$\varphi$-related: for all $x\in M_1$,
$$
(\pi_2)_{\varphi(x)}(\alpha,\beta) = (\pi_1)_x
(\varphi^*\alpha,\varphi^*\beta),\;\;\; \forall \, \alpha,\beta \in
T_x^*M_2.
$$

Conditions \eqref{eq:sym1} and \eqref{eq:sym2} are not equivalent.
Consider $\mathbb{R}^2$ with coordinates $(q^1,p_1)$ and symplectic
structure (written both as a 2-form and as a bivector field)
$$
\omega_{\mathbb{R}^2} = dq^1\wedge dp_1,\;\;\; \pi_{\mathbb{R}^2} =
\frac{\partial}{\partial{p_1}}\wedge\frac{\partial}{\partial{q^1}},
$$
and $\mathbb{R}^4$ with coordinates $(q^1,p_1,q^2,p_2)$ and
symplectic structure
$$
\omega_{\mathbb{R}^4} = dq^1\wedge dp_1 + dq^2\wedge dp_2,\;\;\;
\pi_{\mathbb{R}^4} =
\frac{\partial}{\partial{p_1}}\wedge\frac{\partial}{\partial{q^1}} +
\frac{\partial}{\partial{p_2}}\wedge \frac{\partial}{\partial{q^2}}.
$$
One readily verifies that the projection $\mathbb{R}^4 \to
\mathbb{R}^2$, $(q^1,p_1,q^2,p_2)\mapsto (q^1,p_1)$ satisfies
\eqref{eq:sym2}, but not \eqref{eq:sym1}, while the inclusion
$\mathbb{R}^2\hookrightarrow \mathbb{R}^4$, $(q^1,p_1)\mapsto
(q^1,p_1,0,0)$, satisfies \eqref{eq:sym1} but not \eqref{eq:sym2}.
More generally, note that the nondegeneracy of $\omega_1$ in
\eqref{eq:sym1} forces $\varphi$ to be an immersion, while the
nondegeneracy of $\pi_2$ in \eqref{eq:sym2} forces $\varphi$ to be a
submersion. The two conditions \eqref{eq:sym1} and \eqref{eq:sym2}
become equivalent when $\varphi$ is a local diffeomorphism. It is
also clear that \eqref{eq:sym1} naturally extends to a notion of
morphism between presymplectic manifolds, whereas \eqref{eq:sym2}
leads to the usual notion of \textit{Poisson map} between Poisson
manifolds.

As first noticed by A. Weinstein, the pull-back and push-forward
relations carry over to Dirac structures, leading to two distinct
notions of morphism for Dirac manifolds, generalizing
\eqref{eq:sym1} and \eqref{eq:sym2}.


\subsection{Pulling back and pushing forward}

In order to extend the notions of pullback and pushforward to the
realm of Dirac structures, it is convenient to start at the level of
linear algebra.

\subsubsection{Lagrangian relations}\label{subsub:linear}\
In the linear-algebra context, by a \textit{Dirac structure on a
vector space $V$} we will simply mean a subspace $L \subset V\oplus
V^*$ that is lagrangian, i.e., such that $L=L^\perp$  with respect
to the natural symmetric pairing
$$
\SP{(v_1,\alpha_1),(v_2,\alpha_2)}=\alpha_2(v_1) + \alpha_1(v_2).
$$
We also simplify the notation by writing $\mathbb{V}=V\oplus V^*$;
we denote by $\pr_V:\mathbb{V}\to V$ and $\pr_{V^*}:\mathbb{V}\to
V^*$ the natural projections.

Let $\varphi : V \to W$ be a linear map. We know that 2-forms
$\omega \in \wedge^2 W^*$ can be pulled back to $V$, while bivectors
$\pi \in \wedge^2 V$ can be pushed forward to $W$:
$$
\varphi^*\omega(v_1,v_2) = \omega(\varphi(v_1),\varphi(v_2)), \;\;\;
\varphi_*\pi(\beta_1,\beta_2) =
\pi(\varphi^*\beta_1,\varphi^*\beta_2),
$$
for $v_1,v_2 \in V$, $\beta_1, \beta_2 \in W^*$. These notions are
generalized to Dirac structures as follows. The {\em backward image}
of a Dirac structure $L_W \subset \mathbb{W}$ under $\varphi$ is
defined by
\begin{equation}\label{eq:back}
\mathfrak{B}_\varphi(L_W) =  \{ (v,\varphi^*\beta)\;|\;
(\varphi(v),\beta) \in L_W\} \subset V\oplus V^*,
\end{equation}
while the {\em forward image} of $L_V \in V\oplus V^*$ is
\begin{equation}\label{eq:forward}
\mathfrak{F}_\varphi(L_V) = \{ (\varphi(v),\beta)\;|\;
(v,\varphi^*\beta) \in L_V \} \subset W\oplus W^*.
\end{equation}

\begin{proposition}\label{prop:bfdirac}
Both $\mathfrak{B}_\varphi(L_W)$ and $\mathfrak{F}_\varphi(L_V)$ are
Dirac structures.
\end{proposition}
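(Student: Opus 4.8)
The plan is to lean on the criterion recorded just after the definition of a Dirac structure: because $\SP{\cdot,\cdot}$ is nondegenerate of split signature, a subspace is lagrangian exactly when it is isotropic and its dimension equals that of the ambient base space. So for each of the two images I would check isotropy and then compute the dimension, aiming for $\dim\mathfrak{B}_\varphi(L_W)=\dim V$ and $\dim\mathfrak{F}_\varphi(L_V)=\dim W$.

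Isotropy is the easy half. For two elements $(v_1,\varphi^*\beta_1),(v_2,\varphi^*\beta_2)\in\mathfrak{B}_\varphi(L_W)$, the adjunction identity $(\varphi^*\beta)(v)=\beta(\varphi(v))$ turns the pairing in $\mathbb V$ into the pairing of $(\varphi(v_1),\beta_1)$ and $(\varphi(v_2),\beta_2)$ in $\mathbb W$; since both of these lie in $L_W$ and $L_W$ is isotropic, the result vanishes. The same identity, read in the opposite direction, shows $\mathfrak{F}_\varphi(L_V)$ is isotropic. In particular each image is contained in its own orthogonal, so each already has dimension at most that of its base space, and it remains only to match the dimensions from below.

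The dimension count is the substance of the argument. For the backward image I would present $\mathfrak{B}_\varphi(L_W)$ as $\Theta(\Phi^{-1}(L_W))$, where $\Phi\colon V\oplus W^*\to\mathbb W$, $(v,\beta)\mapsto(\varphi(v),\beta)$, and $\Theta\colon V\oplus W^*\to\mathbb V$, $(v,\beta)\mapsto(v,\varphi^*\beta)$. Rank–nullity then gives $\dim\Phi^{-1}(L_W)=\dim\Ker\varphi+\dim\bigl(L_W\cap(\image\varphi\oplus W^*)\bigr)$, together with $\dim\mathfrak{B}_\varphi(L_W)=\dim\Phi^{-1}(L_W)-\dim\Ker(\Theta|_{\Phi^{-1}(L_W)})$. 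The one nontrivial ingredient is the dimension of $L_W\cap(\image\varphi\oplus W^*)$, which I would extract from the self-orthogonality $L_W=L_W^\perp$ together with the standard identity $(A+B)^\perp=A^\perp\cap B^\perp$; the key computation here is that the orthogonal of $\image\varphi\oplus W^*$ is $0\oplus(\image\varphi)^\circ$. Threading these through, the contributions of $L_W\cap(0\oplus(\image\varphi)^\circ)$ cancel against $\Ker(\Theta|_{\Phi^{-1}(L_W)})$ and one is left with exactly $\dim V$.

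The forward image is handled by the mirror-image argument, now writing $\mathfrak{F}_\varphi(L_V)$ as the image under $(v,\beta)\mapsto(\varphi(v),\beta)$ of the preimage of $L_V$ under $(v,\beta)\mapsto(v,\varphi^*\beta)$, and using that $\image\varphi^*=(\Ker\varphi)^\circ$ and that the orthogonal of $V\oplus(\Ker\varphi)^\circ$ is $\Ker\varphi\oplus 0$; the analogous cancellation yields $\dim W$. I expect the main obstacle to be precisely this bookkeeping of intersection dimensions: the isotropy and the abstract rank–nullity are routine, but computing $L\cap(\image\varphi\oplus W^*)$ correctly, and verifying that the correction term from $L\cap(0\oplus(\image\varphi)^\circ)$ exactly absorbs the kernel of $\Theta$, is where the lagrangian hypothesis $L=L^\perp$ is genuinely used and where annihilator and dimension errors are easiest to make.
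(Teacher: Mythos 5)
Your proposal is correct, but it takes a genuinely different route from the paper. You verify the lagrangian condition head-on: isotropy via the adjunction $(\varphi^*\beta)(v)=\beta(\varphi(v))$, and then the half-dimensionality $\dim\mathfrak{B}_\varphi(L_W)=\dim V$, $\dim\mathfrak{F}_\varphi(L_V)=\dim W$ by a rank--nullity bookkeeping in which the correction term $\dim\bigl(L_W\cap(0\oplus(\image\varphi)^\circ)\bigr)$ (resp.\ $\dim\bigl(L_V\cap(\Ker\varphi\oplus 0)\bigr)$) is produced once by the identity $(A+B)^\perp=A^\perp\cap B^\perp$ applied to $L_W=L_W^\perp$ and absorbed once by the kernel of the second map --- I checked the arithmetic and it closes up exactly as you predict. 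The paper instead proves a single more general lemma (Lemma~\ref{lem:lagrel}): the composition $L_1\circ L_2$ of two lagrangian relations is lagrangian, established not by counting dimensions but by exhibiting $L_1\circ L_2$ as the image of $(L_1\times L_2)\cap\cC$ in the quotient $\cC/\cC^\perp$ for a coisotropic $\cC$ and computing $((L_1\times L_2)\cap\cC+\cC^\perp)^\perp$ directly; the proposition then follows by writing $\mathfrak{B}_\varphi(L_W)=L_W\circ\Gamma_\varphi$ and $\mathfrak{F}_\varphi(L_V)=\Gamma_\varphi\circ L_V$ for the lagrangian relation $\Gamma_\varphi$ of \eqref{eq:philagrel}. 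What your argument buys is elementarity and self-containedness --- no relations, no quotient construction. What the paper's argument buys is generality (arbitrary compositions of relations, hence closure of the whole symplectic-category-style calculus) and reusability: the same coisotropic-reduction picture immediately yields the short exact sequences \eqref{eq:short1} and \eqref{eq:short2}, which are the workhorses of the smoothness arguments in Propositions~\ref{prop:pull} and~\ref{prop:push}. Note that your maps $\Phi$ and $\Theta$ are in effect a disguised form of those exact sequences, since $\Phi^{-1}(L_W)\cong(L_W\times\mathbb{V})\cap\Gamma_\varphi$; so the two proofs are ultimately tracking the same intersections, just packaged differently.
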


The proof will follow from Lemma~\ref{lem:lagrel} below. Note that
when $L_W$ is defined by $\omega\in \wedge^2W^*$, then
$\mathfrak{B}_\varphi(L_W)$ is the Dirac structure associated with
$\varphi^*\omega$ and, similarly, if $L_V$ is defined by $\pi\in
\wedge^2V$, then $\mathfrak{F}_\varphi(L_V)$ is the Dirac structure
corresponding to $\varphi_* \pi$.
So we also refer to the Dirac structures \eqref{eq:back} and
\eqref{eq:forward} as the \textit{pullback} of $L_W$ and the
\textit{pushforward} of $L_V$, respectively.

The pullback and pushforward operations \eqref{eq:back} and
\eqref{eq:forward} are not inverses of one another; one may verify
that
\begin{align}
& \label{eq:bf} \mathfrak{F}_\varphi(\mathfrak{B}_\varphi(L_W))=L_W
\mbox{ if and only if } \, \pr_W(L_W)\subseteq \varphi(V), \mbox{
and }\\
& \label{eq:bf2} \mathfrak{B}_\varphi(\mathfrak{F}_\varphi(L_V))=L_V
\mbox{ if and only if } \, \ker(\varphi)\subseteq L_V\cap V.
\end{align}
 As a result, $\mathfrak{F}_\varphi\circ \mathfrak{B}_\varphi = \Id$ if and only
if $\varphi$ is surjective, while $\mathfrak{B}_\varphi\circ
\mathfrak{F}_\varphi = \Id$ if and only if $\varphi$ is injective.

Pullback and pushforward of Dirac structures can be understood in
the broader context of composition of lagrangian relations (which is
totally analogous to the calculus of canonical relations in the
linear symplectic category, see \cite{GS}, \cite[Sec.~5]{BW}; see
also \cite{We10} and references therein), that we now recall. Let us
denote by $\overline{\mathbb{V}}$ the vector space
$\mathbb{V}=V\oplus V^*$ equipped with the pairing
$-\SP{\cdot,\cdot}$. The product $\mathbb{V}\times \mathbb{W}$
inherits a natural pairing from  $\mathbb{V}$ and $\mathbb{W}$. We
call a lagrangian subspace $L\subset \mathbb{V}\times
\overline{\mathbb{W}}$ a \textit{lagrangian relation} (from $W$ to
$V$).
Note that any Dirac structure $L \subset \mathbb{V}$ may be seen as
a lagrangian relation (either from $V$ to the trivial vector space
$\{0\}$, or the other way around), and any linear map $\varphi :
V\to W$ defines a lagrangian relation
\begin{equation}\label{eq:philagrel}
\Gamma_\varphi = \{ ((w,\beta),(v,\alpha)) \;|\; w=\varphi(v),\,
\alpha=\varphi^*(\beta) \} \subset {\mathbb{W}}\times
\overline{\mathbb{V}}.
\end{equation}

Lagrangian relations can be composed much in the same way as maps.
Consider vector spaces $U, V, W$ and lagrangian relations
$L_1\subseteq \mathbb{U}\times \overline{\mathbb{V}}$ and $L_2
\subset \mathbb{V}\times \overline{\mathbb{W}}$. The
\textit{composition} of $L_1$ and $L_2$, denoted by $L_1\circ L_2
\subset \mathbb{U}\times \overline{\mathbb{W}}$, is defined by
$$
L_1\circ L_2 := \{((u,\alpha),(w,\gamma))\;|\; \exists \,
(v,\beta)\in \mathbb{V}\; s.t.\; (u,\alpha, v ,\beta)\in L_1,\;
(v,\beta,w,\gamma)\in L_2\}.
$$
\begin{lemma}\label{lem:lagrel}
The composition $L_1\circ L_2$ is a lagrangian relation.
\end{lemma}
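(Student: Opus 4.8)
The plan is to verify directly that $L_1\circ L_2$ is a linear subspace of $\mathbb{U}\times\overline{\mathbb{W}}$ that is isotropic and of dimension $\dim U+\dim W$; since the product pairing is nondegenerate, these two properties together are equivalent to $L_1\circ L_2=(L_1\circ L_2)^\perp$, i.e.\ to its being lagrangian. Linearity is immediate from the defining formula, so the content lies in the isotropy and the dimension count.

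I would settle isotropy first. Given two elements $((u,\alpha),(w,\gamma))$ and $((u',\alpha'),(w',\gamma'))$ of $L_1\circ L_2$, choose intermediate witnesses $(v,\beta)$ and $(v',\beta')$ in $\mathbb{V}$. Membership of the corresponding pairs in the lagrangian $L_1\subset\mathbb{U}\times\overline{\mathbb{V}}$ gives $\SP{(u,\alpha),(u',\alpha')}_{\mathbb{U}}=\SP{(v,\beta),(v',\beta')}_{\mathbb{V}}$, where the equality rather than a difference reflects the sign flip built into $\overline{\mathbb{V}}$; membership in $L_2\subset\mathbb{V}\times\overline{\mathbb{W}}$ gives $\SP{(v,\beta),(v',\beta')}_{\mathbb{V}}=\SP{(w,\gamma),(w',\gamma')}_{\mathbb{W}}$. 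Combining the two identities, the intermediate contributions cancel and we obtain $\SP{(u,\alpha),(u',\alpha')}_{\mathbb{U}}-\SP{(w,\gamma),(w',\gamma')}_{\mathbb{W}}=0$, which is exactly the vanishing of the pairing of $\mathbb{U}\times\overline{\mathbb{W}}$ on the two chosen elements. Hence $L_1\circ L_2$ is isotropic.

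The main obstacle is the dimension count, precisely because of the existential quantifier over the intermediate $(v,\beta)$: distinct witnesses and purely intermediate directions must be accounted for. I would package this as a linear reduction. Consider $\mathbb{E}:=\mathbb{U}\times\overline{\mathbb{V}}\times\mathbb{V}\times\overline{\mathbb{W}}$ with its product pairing; then $L_1\times L_2$ is lagrangian in $\mathbb{E}$. In the middle factor $\overline{\mathbb{V}}\times\mathbb{V}$ the diagonal $\Delta$ is lagrangian (the opposite signs on the two copies of $\SP{\cdot,\cdot}_{\mathbb{V}}$ make it isotropic, and it has half the dimension), so $C:=\mathbb{U}\times\Delta\times\overline{\mathbb{W}}$ is coisotropic in $\mathbb{E}$ with $C^\perp=\{0\}\times\Delta\times\{0\}$, and the reduction $C/C^\perp$ is canonically $\mathbb{U}\times\overline{\mathbb{W}}$. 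Unwinding the definitions, $(L_1\times L_2)\cap C$ consists of the tuples $(u,\alpha,v,\beta,v,\beta,w,\gamma)$ with $(u,\alpha,v,\beta)\in L_1$ and $(v,\beta,w,\gamma)\in L_2$, and projecting modulo $C^\perp$ returns exactly $L_1\circ L_2$.

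It then remains to prove the reduction lemma: for a lagrangian $L$ and a coisotropic $C$ in a nondegenerate split-signature space, the reduced subspace $(L\cap C+C^\perp)/C^\perp$ is lagrangian. I would establish this by the formal orthogonality calculus, using $L=L^\perp$, $C^{\perp\perp}=C$, $(A\cap B)^\perp=A^\perp+B^\perp$, $(A+B)^\perp=A^\perp\cap B^\perp$, together with the modular law $(A+B)\cap C=(A\cap C)+B$ valid when $B\subseteq C$. These give $(L\cap C+C^\perp)^\perp=(L+C^\perp)\cap C=(L\cap C)+C^\perp$, so $L\cap C+C^\perp$ is lagrangian in $\mathbb{E}$, of dimension $\tfrac12\dim\mathbb{E}$; subtracting $\dim C^\perp$ yields $\dim\big((L\cap C+C^\perp)/C^\perp\big)=\tfrac12\dim(C/C^\perp)$, as required. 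Combined with the isotropy already shown, this proves $L_1\circ L_2$ is lagrangian. A reader preferring to avoid the reduction machinery can instead compute $\dim(L_1\circ L_2)$ by rank--nullity applied to the projection $(L_1\times L_2)\cap C\to\mathbb{U}\times\overline{\mathbb{W}}$, identifying the kernel with the intermediate directions; the bookkeeping is the same as in the lemma.
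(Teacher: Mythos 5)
Your proposal is correct and follows essentially the same route as the paper: the key step in both is realizing $L_1\circ L_2$ as the reduction of $L_1\times L_2$ by the coisotropic subspace $\cC=\mathbb{U}\times\Delta\times\overline{\mathbb{W}}$ and applying the orthogonality identities $(A\cap B)^\perp=A^\perp+B^\perp$, $(A+B)^\perp=A^\perp\cap B^\perp$ together with the modular law to show $(L\cap\cC+\cC^\perp)^\perp=(L\cap\cC)+\cC^\perp$. Your separate hands-on verification of isotropy is a harmless redundancy, since the reduction argument already yields the full lagrangian condition at once.
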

\begin{proof}
To verify that $(L_1\circ L_2)^\perp = L_1\circ L_2$, consider the
subspace
\begin{equation}\label{eq:C}
\cC := \mathbb{U} \times \Delta \times \overline{\mathbb{W}} \subset
\mathbb{U} \times \overline{\mathbb{V}}\times {\mathbb{V}} \times
\overline{\mathbb{W}},
\end{equation}
where $\Delta$ is the diagonal in $\overline{\mathbb{V}}\times
{\mathbb{V}}$. Then
$$
\cC^\perp = \{0\} \times \Delta \times \{0\} \subseteq \cC,
$$
and $L_1\circ L_2$ is the image of $L_1\times L_2 \subset \mathbb{U}
\times \overline{\mathbb{V}}\times {\mathbb{V}}  \times
\overline{\mathbb{W}}$ in the quotient $\cC/\cC^\perp =
\mathbb{U}\times \overline{\mathbb{W}}$:
\begin{equation}\label{eq:compform}
 L_1\circ L_2 =
\frac{(L_1\times L_2)\cap \cC + \cC^\perp}{\cC^\perp}
\end{equation}

The projection $p: \cC \to \cC/\cC^\perp$ satisfies
$\SP{p(c_1),p(c_2)}=\SP{c_1,c_2}$ for all $c_1, c_2 \in \cC$. So
$p(c_1) \in (L_1\circ L_2)^\perp$, i.e., $\SP{p(c_1),p(c_2)}=0$ for
all $p(c_2)\in L_1\circ L_2$, if and only if $\SP{c_1,c_2}=0$ for
all $c_2 \in (L_1\times L_2)\cap \cC + \cC^\perp$, i.e.,
$$
c_1 \in ((L_1\times L_2)\cap \cC + \cC^\perp)^\perp.
$$
Now note that\footnote{Here we use that, for subspaces $A, B$ of any
vector space equipped with a symmetric pairing with split signature,
we have $(A+B)^\perp = A^\perp \cap B^\perp$ and $(A\cap B)^\perp =
A^\perp + B^\perp$.}
\begin{align*}
((L_1\times L_2)\cap \cC + \cC^\perp)^\perp & = (L_1\times L_2 +
\cC^\perp)\cap \cC \\& = (L_1\times L_2)\cap \cC + \cC^\perp,
\end{align*}
from where we conclude that $(L_1\circ L_2)^\perp = L_1\circ L_2$.
\end{proof}

We can now conclude the proof of Prop.~\ref{prop:bfdirac}.

\begin{proof}(of Prop.~\ref{prop:bfdirac}) Let us view a Dirac structure $L_V$ on $V$ as a lagrangian
relation $L_V \subset \mathbb{V}\times \{0\}$; similarly, we regard
a Dirac structure $L_W$ on $W$ as a lagrangian relation $L_W\subset
\{0\}\times \overline{\mathbb{W}}$. Then one immediately verifies
that the pushforward and pullback with respect to a linear map
$\varphi: V\to W$ are given by compositions of relations with
respect to the lagrangian relation $\Gamma_\varphi$ in
\eqref{eq:philagrel}:
\begin{equation}\label{eq:bfcomp}
\mathfrak{B}_\varphi(L_W) = L_W\circ\Gamma_\varphi, \;\;\;
\mathfrak{F}_\varphi(L_V) = \Gamma_\varphi\circ L_V.
\end{equation}
If follows from Lemma~\ref{lem:lagrel} that \eqref{eq:back} and
\eqref{eq:forward} are Dirac structures.
\end{proof}

Before moving on to Dirac structures on manifolds, we discuss for
later use how the composition expression in \eqref{eq:compform} may
be simplified for the particular cases in \eqref{eq:bfcomp}. For the
composition $\mathfrak{B}_\varphi(L_W) = L_W\circ \Gamma_\varphi$,
we have (c.f. \eqref{eq:C})
$$\cC=
\Delta_{\mathbb{W}}\times \overline{\mathbb{V}} \subset
\overline{\mathbb{W}}\times {\mathbb{W}}\times
\overline{\mathbb{V}},
$$
where $\Delta_{\mathbb{W}}$ is the diagonal subspace in
$\overline{\mathbb{W}}\times \mathbb{W}$, so that $\cC^\perp =
\Delta_{\mathbb{W}}\times \{0\}$; then, by \eqref{eq:compform},
$\mathfrak{B}_\varphi(L_W)$ is the image of $(L_W\times
\Gamma_\varphi)\cap \cC$ under the projection $p: \cC \to
\cC/\cC^\perp = \overline{\mathbb{V}}$. We may write $p$ as a
composition of two maps: the isomorphism $p_1: \cC
\stackrel{\sim}{\to} \mathbb{W}\times {\mathbb{V}}$, $(w,w,v)\mapsto
(w,v)$, followed by the projection $p_2: \mathbb{W}\times
{\mathbb{V}} \to {\mathbb{V}}$. Noticing that $p_1((L_W\times
\Gamma_\varphi)\cap \cC) = (L_W\times {\mathbb{V}})\cap
\Gamma_\varphi$, we can write $\mathfrak{B}_\varphi(L_W) =
p_2((L_W\times {\mathbb{V}})\cap \Gamma_\varphi)$. One also checks
that
$$
\ker(p_2|_{(L_W\times {\mathbb{V}})\cap \Gamma_\varphi)}) =
\{((0,\beta),(0,0))\,|\, (0,\beta) \in L_W,\, \varphi^*\beta = 0\} =
(\ker(\varphi^*)\cap L_W) \times \{0\}.
$$
As a result, we obtain

\begin{proposition}
The following is a short exact sequence:
\begin{equation}\label{eq:short1}
(\ker(\varphi^*)\cap L_W) \times \{0\} \hookrightarrow (L_W\times
{\mathbb{V}})\cap \Gamma_\varphi \twoheadrightarrow
\mathfrak{B}_\varphi(L_W),
\end{equation}
where the second map is the projection $\mathbb{W}\times
\mathbb{V}\to \mathbb{V}$. Similarly, we have the short exact
sequence
\begin{equation}\label{eq:short2}
\{0\}\times (\ker(\varphi)\cap L_V) \hookrightarrow
\Gamma_\varphi\cap (\mathbb{W}\times L_V) \twoheadrightarrow
\mathfrak{F}_\varphi(L_V),
\end{equation}
where the second map is the projection $\mathbb{W}\times
\mathbb{V}\to \mathbb{W}$.
\end{proposition}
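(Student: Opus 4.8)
The plan is to obtain the first sequence \eqref{eq:short1} for free from the computation carried out just before the Proposition, and then to run that same computation for the forward image to get \eqref{eq:short2}. For \eqref{eq:short1} the preceding paragraph already did all the work: it identified $\mathfrak{B}_\varphi(L_W)$ with the image of $(L_W\times\mathbb{V})\cap\Gamma_\varphi$ under the projection $p_2:\mathbb{W}\times\mathbb{V}\to\mathbb{V}$, and computed $\ker\big(p_2|_{(L_W\times\mathbb{V})\cap\Gamma_\varphi}\big)=(\ker(\varphi^*)\cap L_W)\times\{0\}$. Hence $p_2$ restricts to a surjection onto $\mathfrak{B}_\varphi(L_W)$ whose kernel is the left-hand term; since the first arrow is the inclusion of that kernel (visibly injective), exactness holds at each of the three spots, and here I would only need to assemble the two facts already established.

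For \eqref{eq:short2} I would repeat the simplification, now for $\mathfrak{F}_\varphi(L_V)=\Gamma_\varphi\circ L_V$ in \eqref{eq:bfcomp}. Regarding $L_V\subset\mathbb{V}\times\{0\}$, the subspace $\cC$ of \eqref{eq:C} becomes $\cC=\mathbb{W}\times\Delta_{\mathbb{V}}\times\{0\}$, with $\Delta_{\mathbb{V}}$ the diagonal in $\overline{\mathbb{V}}\times\mathbb{V}$, so that $\cC^\perp=\{0\}\times\Delta_{\mathbb{V}}\times\{0\}$ and $\cC/\cC^\perp=\mathbb{W}$. By \eqref{eq:compform}, $\mathfrak{F}_\varphi(L_V)$ is the image of $(\Gamma_\varphi\times L_V)\cap\cC$ under the quotient map $\cC\to\mathbb{W}$; factoring this through the isomorphism $(\Gamma_\varphi\times L_V)\cap\cC\stackrel{\sim}{\to}\Gamma_\varphi\cap(\mathbb{W}\times L_V)$ and the projection $\mathbb{W}\times\mathbb{V}\to\mathbb{W}$ exhibits $\mathfrak{F}_\varphi(L_V)$ as the image of $\Gamma_\varphi\cap(\mathbb{W}\times L_V)$ under that projection. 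I would then read off the kernel: an element $((w,\beta),(v,\alpha))$ of the middle term satisfies $w=\varphi(v)$, $\alpha=\varphi^*\beta$ and $(v,\alpha)\in L_V$, and it projects to $0$ exactly when $(w,\beta)=0$; this forces $\beta=0$, hence $\alpha=\varphi^*\beta=0$ and $\varphi(v)=0$, so $v\in\ker(\varphi)$ with $(v,0)\in L_V$. Thus the kernel equals $\{0\}\times(\ker(\varphi)\cap L_V)$, the left-hand term of \eqref{eq:short2}, and exactness follows as before.

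I do not expect any genuine obstacle: both statements reduce to a surjectivity claim that is immediate from the definitions \eqref{eq:back} and \eqref{eq:forward} (for instance $(\varphi(v),\beta)\in\mathfrak{F}_\varphi(L_V)$ is hit by $((\varphi(v),\beta),(v,\varphi^*\beta))$), together with a one-line kernel computation, so one could just as well bypass \eqref{eq:compform} and verify the three exactness conditions by hand. The only points demanding care are the bookkeeping of the identification $\overline{\mathbb{V}}=\mathbb{V}$ as sets and the reading of $\ker(\varphi)\cap L_V$ as $\ker(\varphi)\cap(L_V\cap V)$ under $V\hookrightarrow\mathbb{V}$, consistently with \eqref{eq:bf2}.
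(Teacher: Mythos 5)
Your proposal is correct and follows essentially the same route as the paper: the first sequence is read off from the simplification of \eqref{eq:compform} carried out in the paragraph preceding the Proposition (identifying $\mathfrak{B}_\varphi(L_W)$ as the image of $(L_W\times\mathbb{V})\cap\Gamma_\varphi$ under $p_2$ and computing the kernel of that restriction), and your treatment of \eqref{eq:short2} is exactly the ``similarly'' the paper leaves implicit, with the kernel computation and the surjectivity check both done correctly.
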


\subsubsection{Morphisms on Dirac manifolds}\
We now transfer the notions of pullback and pushforward of Dirac
structures to the manifold setting, i.e., the role of $\mathbb{V}$
will be played by $\TM$.

Let $L_M \subset \mathbb{T}M$ and $L_N\subset \mathbb{T}N$ be almost
Dirac structures, and $\varphi: M \to N$  a smooth map. Analogously
to \eqref{eq:philagrel}, we consider the lagrangian relation
$$
\Gamma_{\varphi}=\{((Y,\beta),(X,\alpha))\;|\; Y=d\varphi(X),\;
\alpha = \varphi^*\beta \} \subset \varphi^*\TN\oplus \overline{\TM}
$$
where $\varphi^*\TN \to M$ is the pull-back bundle of $\TN$ by
$\varphi$, and we define, as in \eqref{eq:bfcomp}, the
\textit{backward image} of $L_N$ by
\begin{equation}\label{eq:bimage}
\mathfrak{B}_\varphi(L_N) = (\varphi^*L_N) \circ \Gamma_\varphi
\subset \TM,
\end{equation}
and the \textit{forward image} of $L_M$ by
\begin{equation}\label{eq:fimage}
\mathfrak{F}_\varphi(L_M) = \Gamma_\varphi\circ L_M \subset
\varphi^*\TN.
\end{equation}
 By what we saw in Section~\ref{subsub:linear},
$\mathfrak{B}_\varphi(L_N)$ and $\mathfrak{F}_\varphi(L_M)$ define
lagrangian distributions in $\TM$ and $\varphi^*\TN$, respectively;
the issue of whether they are smooth will be addressed in the next
section.

We call $\varphi: M\to N$ a \textit{backward Dirac map} (or simply
\textit{b-Dirac}) when $L_M$ coincides with
$\mathfrak{B}_\varphi(L_N)$,
\begin{equation}\label{eq:pull}
(L_M)_{x} = \mathfrak{B}_\varphi(L_N)_x =  \{
(X,d\varphi^*(\beta))\;|\; (d\varphi(X),\beta)\in (L_N)_{\varphi(x)}
\}, \;\;\; \forall \, x \in M,
\end{equation}
and we call it a \textit{forward Dirac map} (or simply
\textit{f-Dirac}) when $\varphi^*L_N$ (the pullback of $L_N$ to $N$
as a vector bundle) coincides with $\mathfrak{F}_\varphi(L_M)$,
i.e., $\forall x\in M$,
\begin{equation}\label{eq:push}
(L_N)_{\varphi(x)} = \mathfrak{F}_\varphi(L_M)_x  = \{
(d\varphi(X),\beta)\;|\; (X,d\varphi^*(\beta))\in (L_M)_x \}.
\end{equation}
As already observed, \eqref{eq:pull} generalizes the pullback of
2-forms, while \eqref{eq:push} extends the notion of two bivector
fields being $\varphi$-related.

If $\varphi: M\to M$ is a diffeomorphism, then it is f-Dirac if and
only if it is b-Dirac, and these are both equivalent to the
condition that the canonical lift of $\varphi$ to $\mathbb{T}M$
preserves the subbundle $L_M\subset \mathbb{T}M$:
\begin{equation}\label{eq:diracdiff}
\Phi := (d\varphi,(d\varphi^{-1})^*):\mathbb{T}M\to
\mathbb{T}M,\;\;\; \Phi(L_M)=L_M.
\end{equation}
We refer to $\varphi$ as a \emph{Dirac diffeomorphism}.

\begin{remark}\label{rem:transv}
Forward Dirac maps $\varphi: M\to N$ satisfying the additional
transversality condition $\ker(d\varphi)\cap L_M=\{0\}$ are studied
in \cite{ABM,bcwz,BC} in the context of actions and momentum maps.
\end{remark}

\subsection{Clean intersection and smoothness issues}
In the previous subsection, we used pullback and pushforward to
obtain two ways to relate Dirac structures on $M$ and $N$ by a map
$\varphi: M\to N$: via \eqref{eq:pull} and \eqref{eq:push}. We now
discuss the possibility of using pullback and pushforward to {\it
transport} Dirac structures from one manifold to the other. In other
words, we know that the backward (resp. forward) image of a Dirac
structure \eqref{eq:bimage} (resp. \eqref{eq:fimage}) gives rise to
a lagrangian distribution in $\TM$ (resp. $\varphi^*\TN$), and the
issue is whether this distribution fits into a smooth bundle
defining a Dirac structure. Note that while the lagrangian
distribution generated by the backward image \eqref{eq:bimage} is
defined over all of $M$, the forward image \eqref{eq:fimage} only
defines the distribution over points in the image of $\varphi$.
(This reflects the well-known fact that, while any map $\varphi:M\to
N$ induces a map $\Omega^\bullet(N)\to \Omega^\bullet(M)$ by
pullback, there is in general no induced map from
$\mathcal{X}^\bullet(M)$ to $\mathcal{X}^\bullet(N)$.) This makes
the discussion for backward images a bit simpler, so we treat them
first.

\subsubsection{Backward images}\

Let $L_N$ be an almost Dirac structure on $N$, and consider a map
$\varphi:M\to N$. As we already remarked, \eqref{eq:pull} defines,
at each point $x \in M$, a lagrangian subspace of $T_xM\oplus
T^*_xM$. This lagrangian distribution does \textit{not} necessarily
fit into a smooth subbundle of $\TM$, as illustrated by the
following example.

\begin{example}\label{ex:pullns} Let $\varphi: \mathbb{R} \to \mathbb{R}^2$ be the
inclusion $x \mapsto (x,0)$, and consider the Dirac structure on
$\mathbb{R}^2$ defined by the bivector field
$$
\pi = x\frac{\partial}{\partial x}\wedge\frac{\partial}{\partial y}.
$$
We consider its backward image to the $x$-axis via $\varphi$.
Whenever $x\neq 0$, $\pi$ is nondegenerate, and hence corresponds to
a 2-form; its pullback is then a 2-form on $\mathbb{R}-\{0\}$, which
must therefore vanish identically (by dimension reasons). So the
backward image of $\pi$ on $\mathbb{R}$ is $T_x\mathbb{R} \subset
T_x\mathbb{R}\oplus T_x^*\mathbb{R}$ for $x\neq 0$. For $x=0$, one
can readily compute the backward image to be $T_x^*\mathbb{R}
\subset T_x\mathbb{R}\oplus T_x^*\mathbb{R}$, so the resulting
family of lagrangian subspaces is not a smooth vector bundle over
$\mathbb{R}$.
\end{example}

As we now see, a suitable clean-intersection assumption guarantees
the smoothness of backward images. For a smooth map $\varphi: M\to
N$, let $(d\varphi)^*: \varphi^*T^*N \to T^*M$ be the dual of the
tangent map $d\varphi: TM \to \varphi^*TN$. For an almost Dirac
structure $L_N$ on $N$, we denote by $\varphi^* L_N$ the vector
bundle over $M$ obtained by the pull-back of $L_N$ \textit{as a
vector bundle}.

\begin{proposition}\label{prop:pull}
Let $\varphi:M\to N$ be a smooth map, and let $L_N$ be an almost
Dirac structure on $N$. If $\ker((d\varphi)^*)\cap \varphi^*L_N$ has
constant rank, then the backward image $\mathfrak{B}_\varphi(L_N)
\subset \TM$ defines a smooth lagrangian subbundle, i.e., it is an
almost Dirac structure on $M$. If $\mathfrak{B}_\varphi(L_N)$ is a
smooth bundle and $L_N$ is integrable, then
$\mathfrak{B}_\varphi(L_N)$ is integrable, i.e., it is a Dirac
structure.
\end{proposition}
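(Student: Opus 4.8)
The plan is to treat the two assertions separately: smoothness will come from the linear picture of Section~\ref{subsub:linear}, realizing $\mathfrak{B}_\varphi(L_N)$ as the image of a bundle map, and integrability from the naturality of the Courant bracket together with a rank stratification. For smoothness, note first that at each $x\in M$ the space $\mathfrak{B}_\varphi(L_N)_x$ is already lagrangian by Proposition~\ref{prop:bfdirac}, so the only issue is that these subspaces fit into a smooth bundle. I would globalize \eqref{eq:short1}: over $M$ consider the Whitney sum $TM\oplus\varphi^*L_N$ and the bundle map $\sigma\colon TM\oplus\varphi^*L_N\to\varphi^*TN$, $\sigma(X,(Y,\beta))=d\varphi(X)-Y$. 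Its kernel $E$ globalizes the middle term of \eqref{eq:short1}, and $\mathfrak{B}_\varphi(L_N)=p(E)$ for the bundle map $p(X,(Y,\beta))=(X,(d\varphi)^*\beta)$ into $\TM$. Now $\mathrm{im}(\sigma)_x=d\varphi(T_xM)+\pr_T(L_N)_{\varphi(x)}$, whose annihilator is $\ker((d\varphi)^*)_x\cap(L_N\cap T^*N)_{\varphi(x)}$ (using $\pr_T(L)^\circ=L\cap T^*$); this is precisely the subbundle $\ker((d\varphi)^*)\cap\varphi^*L_N$ of the hypothesis. So the hypothesis says $\mathrm{im}(\sigma)$ has constant rank, equivalently (Section~\ref{subsec:facts}) that $E=\ker(\sigma)$ is a smooth subbundle. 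Then $\mathfrak{B}_\varphi(L_N)=p(E)$ is a smooth distribution, being the image of a bundle map, and since it is fibrewise lagrangian it has constant rank $\dim M$; a smooth distribution of constant rank is a subbundle, so $\mathfrak{B}_\varphi(L_N)$ is an almost Dirac structure.

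For integrability, assume $L_M:=\mathfrak{B}_\varphi(L_N)$ is smooth and $L_N$ is integrable. Integrability means the vanishing of the Courant tensor $\Upsilon_{L_M}\in\Gamma(\wedge^3L_M^*)$ of \eqref{eq:couranttensor}, which is a pointwise (tensorial) condition. The engine is the naturality of \eqref{eq:cour}: if sections $(X_i,\alpha_i)$ of $\TM$ and $(Y_i,\beta_i)$ of $\TN$ are $\varphi$-related, meaning $d\varphi\circ X_i=Y_i\circ\varphi$ and $\alpha_i=\varphi^*\beta_i$, then a short Cartan-calculus computation shows that $\Cour{(X_1,\alpha_1),(X_2,\alpha_2)}$ is $\varphi$-related to $\Cour{(Y_1,\beta_1),(Y_2,\beta_2)}$, and moreover $\SP{a,a'}=\varphi^*\SP{b,b'}$ for any $\varphi$-related pairs. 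Consequently, whenever $a_1,a_2,a_3\in\Gamma(L_M)$ are $\varphi$-related to $b_1,b_2,b_3\in\Gamma(L_N)$, one gets $\Upsilon_{L_M}(a_1,a_2,a_3)=\varphi^*\SP{\Cour{b_1,b_2},b_3}=\varphi^*\Upsilon_{L_N}(b_1,b_2,b_3)=0$, using that $L_N$ is involutive.

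The hard part is producing enough $\varphi$-related sections to span $L_M$: a vector field $Y$ tangent to $\pr_T(L_N)$ need not be $\varphi$-related to any vector field on $M$, since $Y(\varphi(x))$ may leave $\mathrm{im}(d\varphi_x)$ away from a single point, so the naturality trick cannot be applied directly to an arbitrary local frame of $L_M$. I would bypass this by a rank stratification: $\mathrm{rank}(d\varphi)$ is lower semicontinuous, so the set $M_0\subseteq M$ where it is locally constant is open and dense. On $M_0$ the constant-rank theorem provides local coordinates in which $\varphi$ is linear, and there a local frame of $L_N$ through the relevant points lifts to $\varphi$-related sections of $L_M$ whose values span $L_M$; the naturality computation above then forces $\Upsilon_{L_M}=0$ on $M_0$. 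Since $L_M$ is a smooth bundle, $\Upsilon_{L_M}$ is a genuine smooth section of $\wedge^3L_M^*$, and its vanishing on the dense set $M_0$ gives $\Upsilon_{L_M}\equiv0$ on all of $M$ by continuity; hence $L_M$ is a Dirac structure. Thus the smoothness hypothesis is exactly what makes $\Upsilon_{L_M}$ a well-defined tensor, while the stratification disposes of the points where $d\varphi$ degenerates — and this construction of spanning $\varphi$-related sections is the main obstacle.
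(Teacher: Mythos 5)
Your smoothness argument is essentially the paper's, with the short exact sequence \eqref{eq:short1} globalized explicitly as the kernel of the bundle map $\sigma$; that half is correct. The integrability half also follows the paper's strategy (naturality of the Courant tensor under $\varphi$-related sections, plus vanishing on a dense set and continuity), but it has a genuine gap at precisely the step you yourself call ``the main obstacle'': you never construct the spanning $\varphi$-related sections, and the one sentence you offer --- that ``a local frame of $L_N$ through the relevant points lifts to $\varphi$-related sections of $L_M$'' --- fails as stated. Even after putting $\varphi$ in the constant-rank normal form \eqref{eq:model}, a section $(Y,\beta)$ of $L_N$ admits a $\varphi$-related partner only if $Y$, restricted to the local image $S$ of $\varphi$, is tangent to $d\varphi(TM)$; this is exactly the obstruction you identified two sentences earlier, and it does not go away in the normal form. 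The logic also runs in the wrong direction: what the density argument needs is that \emph{every} element of $\mathfrak{B}_\varphi(L_N)|_{x_0}$ extends to a local section of $\mathfrak{B}_\varphi(L_N)$ that is $\varphi$-related to \emph{some} section of $L_N$, not that some frame of $L_N$ descends.

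The paper closes this by going through the middle term of \eqref{eq:exactb}: extend $a_{x_0}$ to a section $a_S$ of $\mathfrak{B}_\varphi(L_N)|_S$, then use the local constancy of the rank of $\ker((d\varphi)^*)\cap\varphi^*L_N$ near $x_0$ to split the surjection $(\varphi^*L_N\oplus\TM)\cap\Gamma_\varphi\twoheadrightarrow\mathfrak{B}_\varphi(L_N)$ smoothly and produce $b_S$ with $(b_S,a_S)\in\Gamma_\varphi$; the normal form is then only used to extend $X_S$ off $S$ so that $d\varphi(X)=Y_S$. Note that this forces the dense set to be cut out by \emph{two} conditions --- local constancy of $\mathrm{rank}(d\varphi)$ \emph{and} of $\mathrm{rank}(\ker((d\varphi)^*)\cap\varphi^*L_N)$ --- whereas your $M_0$ records only the first; without the second, the middle term of the sequence need not be a smooth bundle near $x_0$ and the required $b_S$ need not exist. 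Both conditions do hold on an open dense set (the first rank function is lower semicontinuous; on the locus where it is locally constant the second is upper semicontinuous), so your concluding continuity argument survives once the construction is repaired along these lines.
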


\begin{proof}
The backward image of $L_N$ fits into a short  exact sequence that
is a pointwise version of \eqref{eq:short1}:
\begin{equation}\label{eq:exactb}
\ker((d\varphi)^*)\cap \varphi^*L_N \to (\varphi^*L_N\oplus
{\TM})\cap \Gamma_\varphi \to \mathfrak{B}_\varphi(L_N),
\end{equation}
where the first map is the inclusion $\varphi^*\TN\to
\varphi^*\TN\oplus {\TM}$, and the second is the projection of
$\varphi^*\TN\oplus {\TM}$ onto ${\TM}$. Since
$\mathfrak{B}_\varphi(L_N)$ has constant rank, it is clear that
$(\varphi^*\TN\oplus {\TM})\cap \Gamma_\varphi$ has constant rank if
and only if so does $\ker((d\varphi)^*)\cap \varphi^*L_N$; in this
case both are smooth vector bundles, and hence so is
$\mathfrak{B}_\varphi(L_N)$.

For the assertion about integrability, we compare the tensors
$\Upsilon_M$ and $\Upsilon_N$ associated with the almost Dirac
structures $\mathfrak{B}_\varphi(L_N)$ and $L_N$, respectively (see
\eqref{eq:couranttensor}). Let us call (local) sections
$a=(X,\alpha)$ of $\TM$ and $b=(Y,\beta)$ of $\TN$
\textit{$\varphi$-related} if the vector fields $X$ and $Y$ are
$\varphi$-related and $\alpha = \varphi^* \beta$. If now
 $a_i=(X_i,\alpha_i)$,
$b_i=(Y_i,\beta_i)$, $i=1,2,3$, are sections of $\TM$ and $\TN$,
respectively, such that $a_i$ and $b_i$ are $\varphi$-related, then
a direct computation shows that
\begin{equation}\label{eq:3tensorrel}
\SP{a_1, \Cour{a_2,a_3}} = \varphi^* \SP{b_1,\Cour{b_2,b_3}}.
\end{equation}

Suppose that $x_0\in M$ is such that
\begin{itemize}
\item[(i)] $d\varphi$ has constant rank around $x_0$, and
\item[(ii)] $\ker((d\varphi)^*)\cap \varphi^* L_N$ has constant rank
around $x_0$.
\end{itemize}
By (i), we know that locally, around $x_0$ and $\varphi(x_0)$, the
map $\varphi$ looks like
\begin{equation}\label{eq:model}
(x^1,\ldots,x^m)\stackrel{\varphi}{\mapsto}
(x^1,\ldots,x^k,0,\ldots,0),
\end{equation}
where $k$ is the rank of $d\varphi$ around $x$. In this local model,
$M=\{(x^1,\ldots,x^m)\}$, $N=\{(x^1,\ldots,x^n)\}$, and we identify
the submanifolds of $M$ and $N$ defined by $x^i=0$ for $i>k$ with
$S=\{(x^1,\ldots,x^k)\}$.

\smallskip

\noindent{\bf Claim:} \emph{For any $a_{x_0} \in
\mathfrak{B}_{\varphi}(L_N)|_{x_0}$, one can find local sections $a$
of $\mathfrak{B}_{\varphi}(L_N)$ and $b$ of $L_N$ so that
$a|_{x_0}=a_{x_0}$, and $a$ and $b$ are $\varphi$-related.}

\smallskip

To verify the claim, first extend $a_{x_0}$ to a local section
$a_S=(X_S,\alpha_S)$ of $\mathfrak{B}_{\varphi}(L_N)|_S$; using (ii)
and \eqref{eq:exactb}, one can then find a local section
$b_S=(Y_S,\beta_S)$ of $L_N|_S$ so that $(b_S,a_S)$ is a section of
$\Gamma_{\varphi}|_S$, i.e., $d\varphi(X_S)=Y_S$ and
$\alpha_S=\varphi^*\beta_S$. Using the local form \eqref{eq:model},
one may naturally extend $X_S$ to a vector field $X$ on $M$
satisfying $d\varphi(X) =Y_S$. It follows that $a=
(X,\varphi^*\beta_S)$ is a local section of
$\mathfrak{B}_{\varphi}(L_N)$ and, if $b$ is any local section of
$L_N$ extending $b_S$, $a$ and $b$ are $\varphi$-related. This
proves the claim.

Given arbitrary $(a_i)_{x_0} \in
\mathfrak{B}_{\varphi}(L_N)|_{x_0}$, $i=1,2,3$, by the previous
claim we can extend them to local sections $a_i$ of
$\mathfrak{B}_{\varphi}(L_N)$ and find local sections $b_i$ of
$L_N$, so that $a_i$ and $b_i$ are $\varphi$-related. From
\eqref{eq:3tensorrel}, we have
$$
\Upsilon_M((a_1)_{x_0},(a_2)_{x_0},(a_3)_{x_0}) =
\Upsilon_M(a_1,a_2,a_3)(x_0) = \Upsilon_N(b_1,b_2,b_3)(\varphi(x_0))
= 0,
$$
since $L_N$ is integrable. It directly follows that $\Upsilon_M$
vanishes at all points $x\in M$ satisfying (i) and (ii). Since these
points form an open dense subset in $M$, we conclude that
$\Upsilon_M\equiv 0$, as desired.

\end{proof}

We refer to the condition that $\ker((d\varphi)^*)\cap \varphi^*L_N$
has constant rank as the \textit{clean-intersection condition} (c.f.
\cite[Sec.~5]{BW}). By \eqref{eq:transv2}, it always holds when
$L_N$ is defined by a 2-form on $N$.

\begin{example}\label{ex:subm}
Let $L$ be a Dirac structure on $M$, and let $\varphi: C
\hookrightarrow M$ be the inclusion of a submanifold. In this case,
$\ker(d\varphi)^*=TC^\circ$, so the clean-intersection condition is
that $L\cap TC^\circ$ has constant rank. This guarantees that $C$
inherits a Dirac structure by pullback, explicitly given (noticing
that $(L|_C\oplus \mathbb{T}C)\cap \Gamma_{\varphi}$ projects
isomorphically onto $L\cap (TC\oplus T^*M|_C)$ in \eqref{eq:exactb})
by
$$
\mathfrak{B}_\varphi(L) = \{(X,\varphi^*\beta)\,|\, (X,\beta)\in L
\}\cong \frac{L\cap (TC \oplus T^*M|_C)}{L\cap TC^\circ} \subset
TC\oplus T^*C,
$$
as originally noticed in \cite{courant}.
\end{example}

\subsubsection{Forward images}\

Let $L_M$ be an almost Dirac structure on $M$ and $\varphi:M\to N$
be a smooth map. The forward image \eqref{eq:fimage} of $L_M$ by
$\varphi$ defines a lagrangian distribution in $\varphi^* \TN$. The
question of whether it is a Dirac structure on $N$ involves two
issues: first, as in the discussion of backward images, whether this
distribution  fits into a smooth subbundle of $\varphi^* \TN$;
second, if this subbundle determines a subbundle of $\TN$. The first
issue leads to a clean-intersection condition, analogous to the one
for backward images, while the second issue involves an additional
invariance condition with respect to the map $\varphi$. This
invariance condition is of course already necessary when one
considers the pushforward of bivector fields: if $\varphi: M\to N$
is a surjective submersion, for the pushforward of a bivector field
$\pi \in \Gamma(\wedge^2 TM)$ to be well defined, one needs that
$\pi$ satisfies, for all $\alpha,\beta\in T^*_yN$,
$$
\pi_x(\varphi^*\alpha,\varphi^*\beta) =
\pi_{x'}(\varphi^*\alpha,\varphi^*\beta)
$$
whenever $\varphi(x)=\varphi(x')=y$. We extend this invariance to
the context of Dirac structures as follows: we say that $L_M$ is
\textit{$\varphi$-invariant} if the right-hand side of
\eqref{eq:push} is invariant for all $x$ on the same
$\varphi$-fibre.

The next example illustrates a situation where a Dirac structure
satisfies the invariance condition, but its forward image is not a
smooth bundle.

\begin{example}
Consider the 2-form
$$
\omega = x dx\wedge dy
$$
on $\mathbb{R}^2$, independent of $y$, and let $\varphi:
\mathbb{R}^2\to \mathbb{R}$ be the projection $(x,y)\mapsto x$. For
$x\neq 0$, its forward image is $T^*_x\mathbb{R}\subset
T_x\mathbb{R}\oplus T^*_x\mathbb{R}$ (since the 2-form is
nondegenerate at these points, so we may consider the pushforward of
the corresponding bivector field), while for $x=0$ it is
$T_x\mathbb{R}\subset T_x\mathbb{R}\oplus T^*_x\mathbb{R}$.
\end{example}

The following result is parallel to Prop.~\ref{prop:pull}:

\begin{proposition}\label{prop:push}
Let $\varphi:M\to N$ be a surjective submersion and $L_M$ be an
almost Dirac structure on $M$. If $\ker(d\varphi)\cap L_M$ has
constant rank, then the forward image of $L_M$ by $\varphi$ is a
smooth lagrangian subbundle of $\varphi^*\mathbb{T}N$. If the
forward image of $L_M$ is a smooth bundle in $\varphi^*\mathbb{T}N$
and $L_M$ is $\varphi$-invariant, then it defines an almost Dirac
structure on $M$, which is integrable provided $L_M$ is.
\end{proposition}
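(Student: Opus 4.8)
The plan is to follow the pattern of the proof of Proposition~\ref{prop:pull}, now using the forward short exact sequence \eqref{eq:short2} in place of \eqref{eq:short1} and exploiting that $\varphi$ is a surjective submersion, so that $d\varphi$ has constant rank $\mathrm{dim}(N)$ and admits local sections. There are three assertions to establish: smoothness of the forward image as a lagrangian subbundle of $\varphi^*\TN$, its descent to an almost Dirac structure on $N$ under the $\varphi$-invariance hypothesis, and integrability. The genuinely new features compared with the backward case are that the forward image is a priori only a distribution in $\varphi^*\TN$ (so descent requires invariance), while the fact that $\varphi$ is a submersion removes any rank-dropping issue for $d\varphi$ itself.

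For smoothness, I would write the pointwise version of \eqref{eq:short2},
\begin{equation*}
\ker(d\varphi)\cap L_M \hookrightarrow \Gamma_\varphi\cap(\varphi^*\TN\oplus L_M) \twoheadrightarrow \mathfrak{F}_\varphi(L_M),
\end{equation*}
where the surjection is the restriction of the projection $\varphi^*\TN\oplus\TM\to\varphi^*\TN$. Since $\varphi$ is a submersion, $\Gamma_\varphi$ and $\ker(d\varphi)$ are smooth subbundles of constant rank, and $\mathfrak{F}_\varphi(L_M)$ has constant pointwise rank $\mathrm{dim}(N)$, being lagrangian in the fibres of $\varphi^*\TN$. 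Exactness gives $\mathrm{rank}\bigl(\Gamma_\varphi\cap(\varphi^*\TN\oplus L_M)\bigr)=\mathrm{rank}\bigl(\ker(d\varphi)\cap L_M\bigr)+\mathrm{dim}(N)$, so the clean-intersection hypothesis forces the middle term to have constant rank, hence to be a smooth subbundle; as the image of this bundle under a constant-rank projection, $\mathfrak{F}_\varphi(L_M)$ is a smooth lagrangian subbundle.

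For the descent, the $\varphi$-invariance hypothesis says exactly that $\mathfrak{F}_\varphi(L_M)_x\subseteq(\TN)_{\varphi(x)}$ depends only on $\varphi(x)$; since $\varphi$ is surjective, this defines a lagrangian distribution $L_N$ over all of $N$. I would verify smoothness by pulling back along local sections $\sigma$ of the submersion $\varphi$, so that $L_N|_U=\sigma^*\mathfrak{F}_\varphi(L_M)$ is smooth, while the lagrangian condition is pointwise and inherited from the fibres.

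The main work, and the expected obstacle, is integrability. As in Proposition~\ref{prop:pull}, the key is relation \eqref{eq:3tensorrel} together with a supply of $\varphi$-related sections. Given a local section $b=(Y,\beta)$ of $L_N$, I would pull it back to a section of $\mathfrak{F}_\varphi(L_M)$ over $M$ (using the descent above) and lift it through the surjection in the exact sequence to a local section $(X,\varphi^*\beta)$ of $L_M$ with $d\varphi(X)=Y\circ\varphi$; such a smooth lift exists wherever the sequence consists of smooth bundles. For given sections $b_i$ of $L_N$ this produces $\varphi$-related sections $a_i$ of $L_M$, whence \eqref{eq:3tensorrel} yields $\Upsilon_M(a_1,a_2,a_3)=\varphi^*\bigl(\Upsilon_N(b_1,b_2,b_3)\bigr)$. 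Since $L_M$ is integrable the left-hand side vanishes, and because $\varphi$ is a surjective submersion the pullback $\varphi^*$ is injective on functions, forcing $\Upsilon_N(b_1,b_2,b_3)=0$. The delicate point is that $\ker(d\varphi)\cap L_M$ need only have locally constant rank on an open dense subset of $M$; running the argument there and using that $\varphi$, as an open map, sends open dense sets to dense sets, I would conclude $\Upsilon_N\equiv 0$ by continuity, exactly paralleling the density step in Proposition~\ref{prop:pull}.
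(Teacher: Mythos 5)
Your proposal is correct and follows essentially the same route as the paper: the pointwise short exact sequence \eqref{eq:short2} for smoothness, descent to $\TN$ via $\varphi$-invariance, and integrability by lifting sections of $L_N$ to $\varphi$-related sections of $L_M$, applying \eqref{eq:3tensorrel}, and concluding on the open dense set where $\ker(d\varphi)\cap L_M$ has locally constant rank. The only cosmetic difference is that you phrase the final step via injectivity of $\varphi^*$ on functions plus openness of $\varphi$, whereas the paper evaluates $\Upsilon_N$ directly at images of good points and invokes density; these are the same argument.
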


\begin{proof}
Similarly to the proof of Prop.~\ref{prop:pull}, one has the short
exact sequence (c.f. \eqref{eq:short2})
\begin{equation}\label{eq:exactf}
\ker(d\varphi)\cap L_M \to \Gamma_\varphi \cap (\varphi^* \TN\oplus
L_M) \to \mathfrak{F}_\varphi(L_M),
\end{equation}
where the first map is the inclusion $\TM\to \varphi^*\TN\oplus \TM$
and the second is the projection $\varphi^*\TN\oplus \TM\to
\varphi^*\TN$. Since the rank of $\mathfrak{F}_\varphi(L_M)$ is
constant, one argues as in the proof of Prop.~\ref{prop:pull}:  the
constant-rank condition for $\ker(d\varphi)\cap L_M$ guarantees that
$\Gamma_\varphi \cap (\varphi^* \TN\oplus L_M)$ has constant rank,
hence $\mathfrak{F}_\varphi(L_M)\subset \varphi^*\TN$ is a
(lagrangian) vector subbundle.

The $\varphi$-invariance of $L_M$ guarantees that
$\mathfrak{F}_\varphi(L_M)$ determines a lagrangian subbundle of
$\TN$, that we denote by $L_N$, given by the image of
$\mathfrak{F}_\varphi(L_M)$ under the natural projection
$\varphi^*\TN \to \TN$; one may also verify that
$\mathfrak{F}_\varphi(L_M)=\varphi^*L_N$ (the pullback of $L_N$ as a
vector bundle).

The assertion about integrability is verified as in
Prop.~\ref{prop:pull}. If $x\in M$ is such that $\ker(d\varphi)\cap
L_M$ has constant rank around it, for any local section $b$ of $L_N$
around $\varphi(x)$ one can choose a local section $a$ of $L_M$
around $x$ (by splitting \eqref{eq:exactf}) which is
$\varphi$-related to $b$. Given arbitrary $(b_y)_i \in L_N|_y$,
$i=1,2,3$, where $y=\varphi(x)$, we extend them to local sections
$b_i$ of $L_N$, and take local sections $a_i$ of $L_M$ so that $a_i$
and $b_i$ are $\varphi$-related. Denoting by $\Upsilon_N$ and
$\Upsilon_M$ the Courant tensors of $L_N$ and $L_M$, respectively,
we have
$$
\Upsilon_N((b_y)_1,(b_y)_2,(b_y)_3)=\Upsilon_N(b_1,b_2,b_3)(y)=\Upsilon_M(a_1,a_2,a_3)(x)=0,
$$
assuming that $L_N$ is integrable. Using the fact that points $x\in
M$ around which $\ker(d\varphi)\cap L_M$ has constant rank form an
open dense subset of $M$, we conclude that $\Upsilon_N\equiv 0$.
\end{proof}

\begin{example}\label{ex:push}
Let $M$ be equipped with a Dirac structure $L$, and suppose that
$\varphi: M\to N$ is a surjective submersion whose fibres are
connected and such that
$$
\ker(d\varphi) \subseteq L\cap TM = K.
$$
Then $\ker(d\varphi)\cap L = \ker(d\varphi)$ has constant rank. We
claim that $L$ is automatically $\varphi$-invariant. To verify this
fact, the following is the key observation: given a vector field $Z$
(assume it to be complete, for simplicity), its flow $\psi^t_Z$
preserves $L$ (i.e., it is a family of Dirac diffeomorphisms, see
\eqref{eq:diracdiff}), if and only if
\begin{equation}\label{eq:inv}
\Lie_Z(X,\alpha)=([Z,X],\Lie_Z\alpha) \in \Gamma(L),\;\;\; \mbox{
for all }\, (X,\alpha)\in \Gamma(L).
\end{equation}

Since $\varphi$ is a surjective submersion, it is locally a
projection, so any $x\in M$ admits a neighborhood in which, for any
other $x'$ on the same $\varphi$-fibre, one can find a (compactly
supported) vector field $Z$, tangent to $\ker(d\varphi)$, whose flow
(for some time $t$) takes $x$ to $x'$. Note that \eqref{eq:inv}
holds, as the right-hand side of this equation is
$\Cour{(Z,0),(X,\alpha)}$, which must lie in $\Gamma(L)$ as a result
of the integrability of $L$ (since $Z$ is a section of
$\ker(d\varphi) \subseteq L\cap TM$). So the (time-$t$) flow of $Z$
defines a Dirac diffeomorphism taking $x$ to $x'$. Since $\varphi$
has connected fibres, it follows that one can find a Dirac
diffeomorphism (by composing finitely many flows) $\psi$ taking $x$
to any $x'$ on the same $\varphi$-fibre. Using $\psi$, we see that
$$
\mathfrak{F}_\varphi(L|_{x'})=\mathfrak{F}_\varphi(\mathfrak{F}_\psi(L|_x))=
\mathfrak{F}_{\varphi\circ\psi}(L|_{x'})=
\mathfrak{F}_{\varphi}(L|_x),
$$
so $L$ is $\varphi$-invariant.

It follows that the forward image of $L$, denoted by $L_N$, defines
a Dirac structure on $N$. Moreover, $L_N\cap TN = d\varphi(K)$. A
particular case of this situation is when the null foliation of $L$
is simple, $N$ is the leaf space, $\varphi:M\to N$ is the natural
projection, and $K=\ker(d\varphi)$. In this case, $L_N\cap
TN=\{0\}$, so $L_N$ is given by a bivector field $\pi_N$, determined
by
$$
\{f,g\}_N(\varphi(x)) = dg(\pi_N^\sharp(df))(\varphi(x)) =
dg(d\varphi(X_{\varphi^*f}))(\varphi(x))=
\{\varphi^*f,\varphi^*g\}(x).
$$
So $\pi_N$ is the Poisson structure on $N$ arising from the Poisson
bracket on admissible functions (in this case, these agree with the
basic functions relative to the null foliation) on $M$ via the
identification
$$
\varphi^*: C^\infty(N)\to C^\infty(M)_{adm}.
$$
Note that $\varphi$ is f-Dirac by construction, but it is also
b-Dirac by \eqref{eq:bf2}.
\end{example}

\begin{remark}\label{rem:group}
Consider a Lie group $G$ acting on $M$ preserving the Dirac
structure $L$, i.e., an action by Dirac diffeomorphisms. If the
action is free and proper, the quotient map $\varphi: M\to N=M/G$ is
a surjective submersion and $L$ is $\varphi$-invariant. In this case
the clean-intersection assumption, guaranteeing that the forward
image of $L$ on $N$ is a Dirac structure, is that the distribution
$L\cap D$ has constant rank, where $D$ is the distribution tangent
to the $G$-orbits. Extensions of this discussion to non-free actions
have been considered e.g. in \cite{jotzratiu}.
\end{remark}

\section{Submanifolds of Poisson
manifolds and constraints}\label{sec:back}

We now discuss Dirac manifolds arising as submanifolds of Poisson
manifolds, analogous to the presymplectic structures inherited by
submanifolds of symplectic manifolds. The discussion illustrates how
Dirac structures provide a convenient setting for the description of
the intrinsic geometry of constraints in Poisson phase spaces.

As pointed out in Example~\ref{ex:subm}, submanifolds of Dirac
manifolds inherit Dirac structures (modulo a cleanness issue) via
pullback. We will focus on the Dirac structures inherited by
submanifolds $\iota: C\hookrightarrow M $ of a given {\it Poisson
manifold} $(M,\pi)$. We denote by $L$ the Dirac structure on $M$
defined by $\pi$. Then
\begin{equation}\label{eq:bpi}
\mathfrak{B}_\iota(L) = \{(X,\iota^*\beta)\in \mathbb{T}C\,|\,
X=\pi^\sharp(\beta)\in TC,\, \beta \in T^*M \}.
\end{equation}
We will focus on the case where this is a smooth bundle, hence a
Dirac structure on $C$. Note that the clean-intersection condition
for this pullback is that $\ker(\pi^\sharp)\cap TC^\circ$ has
constant rank, which is equivalent to $\pi^\sharp(TC^\circ) \subset
TM|_C$ having constant rank, as a result of the short exact sequence
$\ker(\pi^\sharp)\cap TC^\circ \to TC^\circ \to
\pi^\sharp(TC^\circ)$.

The null distribution of $\mathfrak{B}_\iota(L)$ is
\begin{equation}\label{eq:bkker}
TC\cap \mathfrak{B}_\iota(L) = TC\cap \pi^\sharp(TC^\circ).
\end{equation}
We present a description of the Poisson bracket on admissible
functions on $C$ in terms of the Poisson structure on $M$.

\medskip
\paragraph{\bf The induced Poisson bracket on admissible functions}

Assume that a function $f$ on $C$ admits a local extension $\hat{f}$
to $M$ so that
\begin{equation}\label{eq:ext}
d\hat{f}(\pi^\sharp(TC^\circ))=0.
\end{equation}
Then $X_{\hat{f}}=\pi^\sharp(d\hat{f})$ satisfies
$\beta(X_{\hat{f}})=0$ for all $\beta\in TC^\circ$; i.e.,
$X_{\hat{f}}|_C\in TC$. It is then clear from \eqref{eq:bpi} that
$$
(X_{\hat{f}}|_C, df) \in \mathfrak{B}_\iota(L),
$$
i.e., $X_{\hat{f}}|_C$ is a hamiltonian vector field for $f$.

Recall that, if the null distribution \eqref{eq:bkker} has constant
rank, then  a function $f$ on $C$ is admissible for
$\mathfrak{B}_\iota(L)$ if and only if $df$ vanishes on $TC\cap
\pi^\sharp(TC^\circ)$. If we assume additionally that
$\pi^\sharp(TC^\circ)$ has constant rank, then any admissible
function $f$ on $C$ can be extended to a function $\hat{f}$ on $M$
satisfying \eqref{eq:ext}. Denoting by $\{\cdot,\cdot\}_C$ the
Poisson bracket on admissible functions, we have
\begin{equation}\label{eq:brhat}
\{f,g\}_C = X_f g = X_{\hat{f}}|_C g = X_{\hat{f}} \hat{g} |_C,
\end{equation}
i.e., the Poisson brackets on $C$ and $M$ are related by
\begin{equation}\label{eq:brks}
\{f,g\}_C = \{\hat{f},\hat{g}\}|_C,
\end{equation}
where the extension $\hat{f}$ (resp. $\hat{g}$) satisfies
\eqref{eq:ext}.

\medskip

\paragraph{\bf A word on coisotropic submanifolds (or first-class constraints)}
A submanifold $C$ of $M$ is called \emph{coisotropic} if
\begin{equation}\label{eq:coiso}
\pi^\sharp(TC^\circ)\subseteq TC.
\end{equation}
These submanifolds play a key role in Poisson geometry (see e.g.
\cite{CW}) and generalize the notion of first-class constraints in
physics \cite{Dir,Got2}: when $C$ is defined as the zero set of $k$
independent functions $\psi^i$, condition \eqref{eq:coiso} amounts
to $\{\psi^i,\psi^j\}|_C=0$.

Note that \eqref{eq:coiso} does not guarantee that the pullback
image of $\pi$ to $C$ is smooth; indeed, Example~\ref{ex:pullns}
illustrates a pullback image to a coisotropic submanifold that is
not smooth. Despite this fact, Dirac structures play an important
role in the study of coisotropic submanifolds: they provide a way to
extend the formulation of coisotropic embedding problems in
symplectic geometry (treated in \cite{Got1,Marle}) to the realm of
Poisson manifolds.

More specifically, assume that the pullback image of $L$ to $C$ is
smooth, so that it defines a Dirac structure
$\mathfrak{B}_\iota(L)$. Condition \eqref{eq:coiso} implies that
this Dirac structure has null distribution (c.f. \eqref{eq:bkker})
\begin{equation}\label{eq:coisoker}
TC\cap \mathfrak{B}_\iota(L) = \pi^\sharp(TC^\circ),
\end{equation}
which necessarily has (locally) constant rank (since the rank of the
left-hand side of \eqref{eq:coisoker}  is an upper semi-continuous
function on $C$, while the rank of the right-hand side is lower
semi-continuous, c.f. Section~\ref{subsec:facts}).

Using Dirac structures, one can then pose the converse question: can
any Dirac manifold with constant-rank null distribution be
coisotropically embedded into a Poisson manifold? It is proven in
\cite{CZ} that this is always possible, extending a result of Gotay
\cite{Got1} concerning coisotropic embeddings of constant-rank
presymplectic manifolds into symplectic manifolds. The reader can
find much more on coisotropic embeddings into Poisson manifolds in
\cite{CZ}.

\medskip

\paragraph{\bf Poisson-Dirac submanifolds and the Dirac bracket}

Let us assume that the pullback image \eqref{eq:bpi} is a smooth
bundle, hence defining a Dirac structure $\mathfrak{B}_\iota(L)$ on
$C$. As we have seen, this is guaranteed by the condition that
$\ker(\pi^\sharp)\cap TC^\circ$, or $\pi^\sharp(TC^\circ)$, has
constant rank. The Dirac structure $\mathfrak{B}_\iota(L)$ on $C$ is
defined by a {\it Poisson} structure if and only if its null
distribution is zero; i.e.,  by \eqref{eq:bkker}, if and only if
\begin{equation}\label{eq:poissondirac}
TC\cap \pi^\sharp(TC^\circ)= 0.
\end{equation}
In this case $C$ is called a \textit{Poisson-Dirac submanifold}
\cite{CF} (c.f. \cite[Prop. 1.4]{We83}). When $M$ is symplectic, a
Poisson-Dirac submanifold amounts to a symplectic submanifold (i.e.,
a submanifold whose natural presymplectic structure is
nondegenerate). In general, Poisson-Dirac submanifolds are
``leafwise symplectic submanifolds'', thus generalizing symplectic
submanifolds to the Poisson world.

On Poisson-Dirac submanifolds, all smooth functions are admissible.
Supposing that $\pi^\sharp(TC^\circ)$ has constant rank, the Poisson
bracket $\{\cdot,\cdot\}_C$ on $C$ may be computed as in
\eqref{eq:brks}:
\begin{equation}\label{eq:brkC}
\{f,g\}_C = \{\hat{f},\hat{g}\}|_C,
\end{equation}
where $\hat{f}$, $\hat{g}$ are local extensions of $f$, $g$ to $M$,
satisfying \eqref{eq:ext}.

Amongst Poisson-Dirac submanifolds, there are two special cases of
interest. The first one is when
$$
\pi^\sharp(TC^\circ)= 0,
$$
which is equivalent to $ TC^\circ \subseteq \ker(\pi^\sharp)$, or
$\image(\pi^\sharp)\subseteq TC$; this is precisely the condition
\eqref{eq:bf} for $\mathfrak{F}_\iota(\mathfrak{B}_\iota(L))=L$,
i.e., for the inclusion $\iota: C \hookrightarrow M$ to be a Poisson
map, and hence for $C$ to be a Poisson submanifold of $M$.

The second special case is when
$$
TM|_C = TC\oplus \pi^\sharp(TC^\circ),
$$
and $C$ is called a \textit{cosymplectic} submanifold. Such
submanifolds extend the notion of \textit{second-class constraints}
to the realm of Poisson manifolds; this is the context in which the
Dirac-bracket formula for the pullback Poisson structure
$\{\cdot,\cdot\}_C$ on $C$ can be derived, see e.g.
\cite[Chp.~10]{OR}, providing an alternative expression to
\eqref{eq:brkC} (see \eqref{eq:diracbrk} below).

When $C$ is a cosymplectic submanifold, one has natural projections
$$\mathrm{pr}_{TC}: TM|_C \to TC,\;\;\;
\mathrm{pr}_{\pi^\sharp(TC^\circ)} = \Id- \mathrm{pr}_{TC} :TM|_C
\to \pi^\sharp(TC^\circ),
$$
and it is clear from \eqref{eq:brkC} that the Poisson structure on
$C$ is given by the bivector field $ \pi_C = (\mathrm{pr}_{TC})_*
\pi|_C$. Note also that, for $f$, $g$ smooth functions on $C$ and
$F$ arbitrary local extensions of $f$ to $M$, we have (see
\eqref{eq:brhat}, recalling the extensions $\hat{f}$, $\hat{g}$ in
\eqref{eq:ext})
$$
dg(X_f) = \{f,g\}_C = -df(X_{\hat{g}}|_C) = -dF(X_{\hat{g}})|_C
=d\hat{g}(X_F)|_C
=dg(\mathrm{pr}_{TC}(X_F|_C));
$$
we conclude that the hamiltonian vector fields $X_f$ and $X_{F}$
(with respect to $\pi_C$ and $\pi$, respectively) are related by
\begin{equation}\label{eq:hamproj}
X_f = \mathrm{p}_{TC} (X_{F}|_C).
\end{equation}

Let us suppose that the ``constraint'' submanifold $C$ is defined by
$k$ independent functions $\psi^i$, i.e., $C=\Psi^{-1}(0)$ for a
submersion $\Psi=(\psi^1,\ldots,\psi^k): M\to \mathbb{R}^k$. Then
$d\psi^i$, $i=1,\ldots,k$, form a basis for $TC^\circ$, and $TC =
\ker(d\Psi)$ along $C$. Consider the matrix $(c^{ij})$, where
$$
c^{ij}=\{\psi^i,\psi^j\};
$$
the fact that $C$ is cosymplectic means that $\pi^\sharp(TC^\circ)$
has rank $k$ (since $TC$ has rank $\dim(M)-k$), so
$\pi^\sharp|_{TC^\circ} : TC^\circ \to \pi^\sharp(TC^\circ)$ is an
isomorphism, and $X_{\psi_i}=\pi^\sharp(d\psi_i)$, $i=1,\ldots,k$,
form a basis of $\pi^\sharp(TC^\circ)$. Hence, for
$a_j\in\mathbb{R}$, the condition $c^{ij}(x)a_j=0$ for all $i$ means
that $-d\psi^i(a_jX_{\psi^j})=0$ for all $i$, that is,
$a_jX_{\psi^j}\in TC\cap \pi^\sharp(TC^\circ)=\{0\}$, which implies
that $a_j=0$. So $C$ being cosymplectic means that the matrix
$(c^{ij})$ is invertible (in this case, one may also check that $k$
is necessarily even, so $C$ has even codimension).
Let us denote by $(c_{ij})$ its inverse. We have the following
explicit expression for the projection
$\mathrm{pr}_{\pi^\sharp(TC^\circ)}$: for $Y\in TM|_C$,
$$
\mathrm{pr}_{\pi^\sharp(TC^\circ)}(Y) = d\psi^i(Y)c_{ij}X_{\psi^j}.
$$
To verify this formula, note that if $Y\in TC$, then the right-hand
side above vanishes. On the other hand, for each $X_{\psi^i}$ in
$\pi^\sharp(TC^\circ)$,
$$
\mathrm{pr}_{\pi^\sharp(TC^\circ)}(X_{\psi^l}) =
d\psi^i(X_{\psi^l})c_{ij}X_{\psi^j} = c^{li}c_{ij}X_{\psi^j} =
\delta_j^l X_{\psi^j}= X_{\psi^l},
$$
so $\mathrm{pr}_{\pi^\sharp(TC^\circ)}$ is the identity on
$\pi^\sharp(TC^\circ)$. By \eqref{eq:hamproj}, we see that
$$
\{f,g\}_C = (\mathrm{pr}_{TC} (X_{F}|_C))g = (\mathrm{pr}_{TC}
(X_{F}|_C))G = ((\id - \mathrm{pr}_{\pi^\sharp(TC^\circ)}) X_{F}|_C)
G,
$$
where $F$, $G$ are arbitrary local extensions of $f$, $g$. So
\begin{equation}\label{eq:diracbrk}
\{f,g\}_C = (\{F,G\} - \{F,\psi^i\}c_{ij}\{\psi^j,G\})|_C,
\end{equation}
which is Dirac's bracket formula for cosymplectic manifolds.

\medskip

\paragraph{\bf Momentum level sets}

Another important class of submanifolds of Poisson manifolds is
given by level sets of conserved quantities associated with
symmetries, such as momentum maps.

Suppose that a Lie group $G$ acts on a Poisson manifold $M$ in a
hamiltonian fashion, with equivariant momentum map $J: M\to \gstar$.
This means that
\begin{equation}\label{eq:momcond}
u_M = X_{\SP{J,u}},
\end{equation}
where $u_M$ is the infinitesimal generator of the action associated
with $u\in \frakg$, and the equivariance of $J$ is with respect to
the coadjoint $G$-action on $\gstar$; infinitesimally, the
equivariance condition implies that $J$ is a Poisson map with
respect to the Lie-Poisson structure on $\gstar$ (see e.g.
\cite[Sec.~10]{OR} for details). We denote by $D\subseteq TM$ the
distribution on $M$ tangent to the $G$-orbits.

If $\mu\in \gstar$ is a regular value for $J$, one considers the
submanifold $C = J^{-1}(\mu)$, which naturally carries an action of
$G_\mu$, the isotropy group of the coadjoint action at $\mu$. Let us
suppose that this $G_\mu$-action on $J^{-1}(\mu)$ is free and
proper, so that its orbit space is a smooth manifold,  and we can
consider the following diagram
\begin{equation}\label{eq;diagram}
 \xymatrix {
J^{-1}(\mu)  \ar[r]^\iota \ar[d]_p  & M \\
J^{-1}(\mu)/G_\mu,
 }
\end{equation}
where $\iota$ is the inclusion and $p$ is the quotient map, which is
a surjective submersion. The reduction theorem in this context (see
e.g. \cite[Sec.~10.4.15]{OR}) says that $J^{-1}(\mu)/G_\mu$ inherits
a natural Poisson structure $\pi_\mu$, with corresponding bracket
$\{\cdot,\cdot\}_\mu$, characterized by the following condition:
\begin{equation}\label{eq:brkcond}
\{f,g\}_\mu \circ p = \{\bar{f},\bar{g}\}\circ \iota,
\end{equation}
where $\bar{f}$, $\bar{g}$ are local extensions of $p^*f$, $p^*g \in
C^\infty(J^{-1}(\mu))$ to $M$ satisfying $d\bar{f}|_{D}= 0$,
$d\bar{g}|_D=0$. In this picture, both $M$ and $J^{-1}(\mu)/G_\mu$
are Poisson manifolds, but there is no geometric interpretation of
the ``intermediate'' manifold $C = J^{-1}(\mu)$.

When the Poisson structure on $M$ is nondegenerate, i.e., defined by
a symplectic form $\omega$, we are in the classical setting of
Marsden-Weinstein reduction. In this case $J^{-1}(\mu)$ is naturally
a presymplectic manifold, with 2-form $\iota^*\omega$, and the
Poisson structure $\pi_\mu$ on $J^{-1}(\mu)/G_\mu$ is defined by a
symplectic form $\omega_\mu$; the condition relating $\pi$ and
$\pi_\mu$ is easily expressed in term of the corresponding
symplectic forms as
\begin{equation}\label{eq:formcond}
p^*\omega_\mu = \iota^* \omega,
\end{equation}
which determines $\omega_\mu$ uniquely.
As we now see,  Dirac structures allow us to identify the geometric
nature of $J^{-1}(\mu)$ in general and interpret \eqref{eq:brkcond}
as a direct generalization of \eqref{eq:formcond}.

We start by considering the backward image of $\pi$ with respect to
$\iota$. The clean-intersection condition guaranteeing the
smoothness of $\mathfrak{B}_\iota(L_\pi)$ is that the distribution
$$
\ker(dJ)^\circ \cap L_\pi = \image((dJ)^*)\cap L_\pi =
\image((dJ)^*)\cap\ker(\pi^\sharp)
$$
has constant rank over $J^{-1}(\mu)$. Since $\mu$ is a regular value
of $J$ and, for $x\in J^{-1}(\mu)$,
$$
\image((dJ)^*)\cap\ker(\pi^\sharp)|_x=\{\alpha \in T^*_xM\,|\,
\alpha=(dJ)^*(u), \, \pi^\sharp(\alpha)= u_M(x) =0 \}=
(dJ)^*(\mathfrak{g}_x),
$$
where $\frakg_x$ is the isotropy Lie algebra of the $G$-action at
$x$, we see that the clean-intersection condition amounts to
checking that $\dim(\frakg_x)$ is constant for $x\in J^{-1}(\mu)$.
Since $J$ is equivariant, $G_x \subseteq G_\mu$, and $G_x$ agrees
with the isotropy group of $G_\mu$ at $x$, which is trivial since
$G_\mu$ acts on $J^{-1}(\mu)$ freely. Hence $\dim(\frakg_x)=0$ for
$x\in J^{-1}(\mu)$.
It follows that $\mathfrak{B}_\iota(L_\pi)$ makes the momentum level
set $J^{-1}(\mu)$ into a Dirac manifold in such a way that $\iota$
is a b-Dirac map.

Since $\pi^\sharp(\ker(dJ)^\circ)=D$ (by \eqref{eq:momcond}), the
null distribution of $\mathfrak{B}_\iota(L_\pi)$ is (c.f.
\eqref{eq:bkker})
$$
T(J^{-1}(\mu))\cap D = \ker(p),
$$
where the equality follows from the equivariance of $J$ and the fact
that $\ker(p)$ agrees with the distribution tangent to the
$G_\mu$-orbits on $J^{-1}(\mu)$. By Example~\ref{ex:push} and
Remark~\ref{rem:group}, the forward image of
$\mathfrak{B}_\iota(L_\pi)$ under $p$ defines a Poisson structure on
$J^{-1}(\mu)/G_\mu$, with bracket given by
\begin{equation}\label{eq:redp}
\{f,g\}_\mu \circ p = \{p^*f, p^*g\}_C,
\end{equation}
where $\{\cdot,\cdot\}_C$ is the Poisson bracket on admissible
functions on $J^{-1}(\mu)$. By \eqref{eq:brks},
$$
\{p^*f, p^*g\}_C = \{\widehat{p^*f},\widehat{p^*g}\}|_C,
$$
where $\widehat{p^*f}$ denotes a local extension of $p^*f$ to $M$
with differential vanishing on $\pi^\sharp(TC^\circ)=
\pi^\sharp(\image(dJ^*))=D$. So the bracket $\{\cdot,\cdot\}_\mu$ in
\eqref{eq:redp} is the same as the bracket defined in
\eqref{eq:brkcond}.

In conclusion, Dirac geometry completes the reduction diagram
\eqref{eq;diagram} in the following way: the level set $J^{-1}(\mu)$
inherits a Dirac structure $\mathfrak{B}_\iota(L_\pi)$ for which
$\iota$ is a b-Dirac map, while the Poisson structure $\pi_\mu$ on
the quotient $J^{-1}(\mu)/G_\mu$ is its forward image, so $p$ is an
f-Dirac map. As mentioned in Example~\ref{ex:push}, $p$ is also a
b-Dirac map, and this condition uniquely determines $\pi_\mu$. We
can then write the relationship between $\pi$ and $\pi_\mu$ (c.f.
\eqref{eq:brkcond}) exactly like we do for symplectic forms:
$$
\mathfrak{B}_\iota(L_\pi) = \mathfrak{B}_p(L_{\pi_\mu}),
$$
noticing that, in the symplectic setting,
$\mathfrak{B}_\iota(L_\omega) = \mathfrak{B}_p(L_{\omega_\mu})$ is
nothing but \eqref{eq:formcond}.

\section{Brief remarks on further developments}\label{sec:survey}

In this last section, we merely indicate some directions of more
recent developments and applications of Dirac structures. Although
these lectures have mostly focused on how Dirac structures arise in
the study of constraints of classical systems on Poisson manifolds,
subsequent work to Courant's paper \cite{courant} revealed
connections between Dirac structures and many other subjects in
mathematics and mathematical physics. Even in mechanics, (almost)
Dirac structures have found applications beyond their original
motivation, such as the study of implicit hamiltonian systems
(including nonholonomic systems, see \cite{schaft} for a survey and
original references) as well as implicit lagrangian systems and
variational principles, see e.g. \cite{MY}.

Understanding the properties of the Courant bracket \eqref{eq:cour},
used to formulate the integrability condition for Dirac structures,
has led to the notion of \textit{Courant algebroid} \cite{LWX},
which consists of a vector bundle $E\to M$, a symmetric
nondegenerate fibrewise pairing on $E$, a bracket on the space
$\Gamma(E)$, and a bundle map $E\to TM$ satisfying axioms that
emulate those of $\TM\to M$ equipped with $\SP{\cdot,\cdot}$,
$\Cour{\cdot,\cdot}$ and $\pr_T: \TM\to TM$, see
Section~\ref{sec:dirac} (a detailed discussion can be found in
\cite{kos}). Dirac structures on general Courant algebroids are used
in \cite{LWX} to extend the theory of Lie bialgebras, Manin triples
and Poisson-Lie groups to the realm of Lie algebroids and groupoids.
From another perspective, Courant algebroids admit a natural
super-geometric description in terms of certain types of
differential graded symplectic manifolds \cite{Roy2,severa}; in this
context, Dirac structures may be appropriately viewed as analogs of
lagrangian submanifolds. Courant algebroids have also been studied
in connection with field theories and vertex algebras (see e.g.
\cite{AS,Br} and references therein).

A distinguished class of Courant algebroids, introduced by Severa
(see e.g. \cite{SW}), is defined through a modification of the
Courant bracket \eqref{eq:cour} on $\TM$ by a closed 3-form $H \in
\Omega^3(M)$:
\begin{equation}\label{eq:Hcour}
\Cour{(X,\alpha),(Y,\beta)}_H = \Cour{(X,\alpha),(Y,\beta)}+
i_Yi_XH.
\end{equation}
Such Courant algebroids are known as \textit{exact}, and Dirac
structures relative to $\Cour{\cdot,\cdot}_H$ are often referred to
as \textit{$H$-twisted}. For example, the integrability condition
for a bivector field $\pi$ with respect to \eqref{eq:Hcour} becomes
$$
\Jac_\pi(f,g,h)=H(X_f,X_g,X_h),
$$
see \cite{KS}. A key example of an $H$-twisted Dirac structure is
the so-called \textit{Cartan-Dirac} structure (see e.g.
\cite{ABM,bcwz}), defined on any Lie group $G$ whose Lie algebra is
equipped with a non-degenerate Ad-invariant quadratic form; in this
case, $H \in \Omega^3(G)$ is the associated Cartan 3-form.

As mentioned in Section~\ref{sec:properties}, any Dirac structure
$L\subset \TM$ carries a Lie algebroid structure. From a
Lie-theoretic standpoint, it is natural to search for the class of
Lie groupoids ``integrating'' Dirac structures. For Poisson
structures, the corresponding global objects are the so-called
\textit{symplectic groupoids} \cite{CDW} (see also \cite{catfel},
\cite{CF2}), originally introduced as part of a quantization scheme
for Poisson manifolds (see e.g. \cite{BW}). More generally, the
global counterparts of Dirac structures were identified in
\cite{bcwz}, and called \textit{presymplectic groupoids}. Passing
from Dirac structures to presymplectic groupoids is key to uncover
links between Dirac structures, momentum maps and equivariant
cohomology, see e.g. \cite{bcwz,xu}.

The connection between Dirac structures and the theory of momentum
maps is far-reaching. As recalled in Section~\ref{sec:back},
classical momentum maps in Poisson geometry are defined by Poisson
maps
$$
J: M\to \gstar
$$
from a Poisson manifold into the dual of a Lie algebra. In the last
25 years, generalized notions of momentum maps have sprung up in
symplectic geometry, mostly motivated by the role of symplectic
structures in different areas of mathematics and mathematical
physics; see \cite{We} for a general discussion and original
references. Prominent examples include the theory of hamiltonian
actions of symplectic groupoids \cite{MiWe}, in which arbitrary
Poisson maps may be viewed as momentum maps, and the theory of
$G$-valued moment maps, which arises in the study of moduli spaces
in gauge theory \cite{AMM}. These seemingly unrelated new notions of
momentum map turn out to fit into a common geometric framework
provided by Dirac structures and Courant algebroids. Much of the
usual hamiltonian theory in Poisson geometry carries over to the
realm of Dirac structures, and the key observation is that, just as
classical momentum maps are Poisson maps, many generalized momentum
maps are morphisms between Dirac manifolds (in an appropriate sense,
see Remark~\ref{rem:transv}); see e.g. \cite{ABM,BC,bcwz,BIS}.

Another viewpoint to Dirac structures that has proven fruitful is
based on their description via  \textit{pure spinors} \cite{AX,Hit}
(see also \cite{ABM}). The bundle $\TM$, equipped with the symmetric
pairing \eqref{eq:pair}, gives rise to a bundle $\Cl(\TM)$ of
Clifford algebras, and the exterior algebra $\wedge^\bullet T^*M$
carries a natural representation of $\Cl(\TM)$, defined by the
action
$$
(X,\alpha)\cdot \phi = i_X\phi +
\alpha\wedge \phi,
$$
for $(X,\alpha)\in \TM$ and $\phi \in \wedge^\bullet T^*M$. In this
way, $\wedge^\bullet T^*M$ is viewed as a spinor module over
$\Cl(\TM)$. The subspace $L\subset \TM$ that annihilates a given
(nonzero) $\phi$ is always isotropic, and $\phi$ is called a
\textit{pure spinor} when $L$ is lagrangian: $L=L^\perp$. Pure
spinors corresponding to the same annihilator $L$ must agree up to
rescaling; as a result, almost Dirac structures on $M$ are uniquely
characterized by a (generally nontrivial) smooth line bundle
\begin{equation}\label{eq:line}
l \subset \wedge^\bullet T^*M,
\end{equation}
generated by pure spinors at all points. The integrability of $L$
relative to the ($H$-twisted) Courant bracket  can be naturally
expressed in terms of $l$ as well, see e.g \cite{AX,Gua}. In this
way, Dirac structures are represented  by specific line bundles
\eqref{eq:line} and, by taking (local) sections, one obtains
concrete descriptions of Dirac structures through differential
forms. For example, the Dirac structure $L_\omega$ \eqref{eq:Lomega}
is the annihilator of the differential form $e^{-\omega} \in
\Omega^\bullet(M)$. This approach to Dirac structures leads to
natural constructions of invariant volume forms associated with
$G$-valued moment maps \cite{ABM} and has been an essential
ingredient in the study of generalized complex structures
\cite{Hit,Gua}, that we briefly recall next.

{\it Generalized complex structures} \cite{Gua,Hit} are particular
types of complex Dirac structures $L\subset \TM\otimes \mathbb{C}$,
with the extra property that $L\cap \overline{L}=\{0\}$; they
correspond to $+i$-eigenbundles of endomorphisms $\mathcal{J}: \TM
\to \TM$ satisfying $\mathcal{J}^2 = -1$ and preserving the pairing
\eqref{eq:pair}. These geometrical structures unify symplectic and
complex geometries, though the most interesting examples fall in
between these extreme cases (see e.g. \cite{CG}). Given a complex
structure $J:TM\to TM$ and a symplectic structure $\omega\in
\Omega^2(M)$, viewed as a bundle map $TM\to T^*M$, the associated
generalized complex structures $\TM \to \TM$ are
$$
\begin{pmatrix}
J & 0\\
0 & -J^*
\end{pmatrix}, \qquad
\begin{pmatrix}
0 & -\omega^{-1}\\
\omega & 0
\end{pmatrix}.
$$
Besides their mathematical interest, generalized complex structures
have drawn much attention due to their strong ties with theoretical
physics, including e.g. topological strings and super symmetric
sigma models, see e.g. \cite{kapli,zabzine} and references therein.


\end{document}